\documentclass[preprint,12pt]{elsarticle}

\usepackage[T1]{fontenc}
\usepackage[utf8]{inputenc}
\usepackage{graphicx}
\graphicspath{{figures/}}
\usepackage{amsmath,amssymb,amsthm}
\biboptions{sort&compress,numbers}

\newtheorem{theorem}{Theorem}[section]
\newtheorem{proposition}[theorem]{Proposition}
\newtheorem{lemma}[theorem]{Lemma}
\newtheorem{corollary}[theorem]{Corollary}

\newtheorem{remark}[theorem]{Remark}

\newcommand{\qbinom}[3]{\left[\begin{array}{c}#1\\#2\end{array}\right]_{#3}}
\newcommand{\Dyck}{\mathcal D}

\newcommand{\R}{\mathcal R}
\newcommand{\bbZ}{\mathbb Z}
\newcommand{\dd}{\mathrm d}

\journal{Advances in Applied Mathematics}

\begin{document}

\begin{frontmatter}

\title{Area and water-capacity statistics for upper hulls of Dyck paths}

\author[melbourne]{Aleksander L. Owczarek}
\ead{aleks.owczarek@gmail.com}
\author[qmul]{Thomas Prellberg\corref{cor1}}
\ead{t.prellberg@qmul.ac.uk}
\cortext[cor1]{Corresponding author.}

\address[melbourne]{School of Mathematics and Statistics, University of Melbourne, Victoria 3010, Australia}
\address[qmul]{School of Mathematical Sciences, Queen Mary University of London, London E1 4NS, United Kingdom}

\begin{abstract}
We study Dyck paths refined simultaneously by proper area and water capacity, where water capacity is measured above the path and below its lattice-path upper hull.  The finite-height ingredients used in the enumeration are classical bounded-height area-polynomial and continued-fraction objects.  The upper-hull decomposition produces a coupled area--capacity substitution, which gives an exact four-variable height expansion with denominator branches indexed by the height levels.  The full generating function is asymmetric in the two weights, while the height summands admit a symmetric unreduced denominator representation under interchange of the area and capacity weights.  In the open square $0<p,q<1$, we prove that the length radius of $G(x,1,p,q)$ is the minimum of the positive real denominator branches.  The proof combines uniform normal convergence of the height expansion below this first branch with a Perron-root representation of the branch locations and an interval log-submodularity theorem for spectral radii of weighted paths.  On the diagonal $p=q=s$, the classical Chebyshev specialisation gives explicit branch crossings and a $(1-s)^{2/3}$ branch-envelope accumulation law at the Dyck critical point.
\end{abstract}

\begin{keyword}
Dyck paths \sep water capacity \sep area statistics \sep Perron eigenvalues \sep log-concavity
\MSC[2020] 05A15 \sep 05A19 \sep 05A30 \sep 15A18 \sep 82B20
\end{keyword}

\end{frontmatter}

\setlength{\emergencystretch}{2em}

\section{Introduction}
\label{sec:introduction}

Dyck paths are a central class of Catalan objects and a standard testing ground for exact enumeration with geometric weights.  Their area-weighted enumeration belongs to the classical $q$-Catalan tradition \cite{carlitz1964,furlinger1985}, and leads naturally to finite continued fractions and height-restricted $q$-orthogonal forms \cite{flajolet1980,owczarek2012}.  Related asymptotic phenomena for area-weighted paths and lattice-vesicle models are treated in \cite{brak1994,prellberg1995,haug2015,haug2017}, with further vesicle variants in \cite{owczarek2010,owczarek2014}.  The bounded-height continued-fraction, denominator-polynomial and orthogonal-polynomial constructions are used below as standard background.  The new enumeration lies in a two-statistic refinement involving both the area below a Dyck path and the amount of water retained above it and below its upper hull.  Here ``upper hull'' means the lattice-path water-filling envelope defined precisely in Section~\ref{sec:model}, rather than the ordinary Euclidean convex hull.

Another prominent two-statistic refinement of Dyck paths is the $q,t$-Catalan theory, where area is paired with bounce or with diagonal inversions, in connection with diagonal harmonics and Macdonald-polynomial theory \cite{haglund2003,haglund2005,haglund2008}.  The present statistic is different in both origin and purpose.  Water capacity is defined from the upper-hull, or water-filling, gaps of a Dyck path; it differs from bounce and dinv, and the present problem is exact enumeration for two geometric weights rather than a $q,t$-symmetric refinement.  The problem here is to determine the exact length generating function refined simultaneously by proper area and retained water.

The water-capacity statistic has been studied in several enumerative settings.  It appears for words and their bargraph representations \cite{blecher2018words,archibald2021}, for integer compositions \cite{blecher2018compositions}, for permutations \cite{blecher2019permutations}, and for random functions \cite{blecher2025randomfunctions}.  Mansour and Shattuck studied water cells in bargraphs of compositions and set partitions \cite{mansour2018watercells}, as well as in pattern-restricted compositions \cite{mansour2019pattern}.  Recent work continues this direction for restricted compositions \cite{hopkins2025,shattuck2025} and for black or black--white cell-capacity statistics on words and Catalan polyominoes \cite{fried2025,baril2026}.

For Dyck paths, the direct predecessor on the water-capacity side is the work of Blecher, Brennan and Knopfmacher \cite{blecher2020}, who introduced the water capacity of Dyck paths, derived generating functions by summing over the maximum height, and obtained asymptotics for the average capacity at fixed semi-length.  Knopfmacher and Blecher subsequently treated the corresponding statistic on Dyck bridges, or Grand-Dyck paths, in \cite{blecher2025bridges}.  Related Catalan-word and bargraph results include work of Callan, Mansour and Ram\'irez on semiperimeter, area and exterior corners \cite{callan2021}, work of Mansour, Ram\'irez and Toquica on lattice-point and vertex-degree statistics \cite{mansour2021latticepoints}, and work of Mansour and Shattuck on Catalan and smooth words according to capacity \cite{mansour2025catalan}.

We keep the Dyck-path geometry of \cite{blecher2020} but weight two complementary geometric statistics at once.  The weight $q$ marks the proper area below the path, while $p$ marks the water capacity above the path and below the upper hull.  For a fixed hull, these two statistics partition the plaquettes below the hull into path-area plaquettes and retained-water plaquettes.  The resulting model formally contains the ordinary Catalan case $p=q=1$, the area-weighted line $p=1$, the capacity-weighted line $q=1$, and the diagonal hull-area specialisation $p=q$.  The exact generating-function identity specialises to all of these regimes.  The main analytic theorem of the paper has a narrower scope: it concerns the length radius of $G(x,1,p,q)$ in the open square $0<p,q<1$.  Boundary cases such as $p=1$, $q=1$, and $p=q=1$ require separate analytic arguments unless explicitly stated otherwise.

The first part of the paper is a formal enumeration.  A path of fixed height is decomposed into its minimal upper hull and a sequence of height-restricted inverted Dyck paths inserted below the hull.  Combining this decomposition with the classical bounded-height area polynomials gives an exact expansion
\begin{equation}
  G(x,y,p,q)=1+\sum_{h\geq1}G_h(x,y,p,q),
\end{equation}
where $x$ and $y$ mark semi-length and height, and $p$ and $q$ mark capacity and area.  Each height summand $G_h$ is an explicit rational expression in the finite height-restricted area-polynomial denominators recalled in Section~\ref{sec:height-restricted}.  Thus the finite-height components are standard.  The resulting expression couples area and capacity through the upper-hull substitution, gives symmetric denominator factors, and leads to the analytic problem of controlling the infinite family of height summands.

The second part is analytic.  Each insertion level produces a possible positive denominator singularity in the length variable $x$.  We call this possible singularity a branch; the active branch is the one that attains the smallest positive value and hence determines the candidate radius.  The issue is whether the infinite height sum can create an earlier singularity than these finite-level denominator branches.  For $0<p,q<1$, normal convergence rules out such earlier singularities.

More precisely, after writing $r=q^2/p^2$, the $k$th branch can be represented through the Perron eigenvalue of a geometrically weighted path matrix.  If $\Lambda_h(r)$ denotes the square of that Perron eigenvalue for the $h$-vertex path, then the radius in the open square is
\begin{equation}
  R(p,q)=\min_{k\geq1}
  \frac{1}{p^{2k-1}q\,\Lambda_{k+1}(q^2/p^2)}.
\end{equation}
The theorem gives the radius in the open square; local singular expansions and boundary coefficient asymptotics lie outside its scope.

The proof has two main ingredients.  First, we establish normal convergence of the height expansion on compact subsets lying below the first positive branch, so the infinite sum is analytic before the finite-level obstruction.  Second, we prove a log-submodularity statement for spectral radii of overlapping intervals in a positively weighted path.  In the geometric specialisation this yields the log-concavity inequality for the Perron-root sequence that orders the branch competition.  Consequently, for fixed $q^2/p^2$, active branches can change only at neighbouring crossings, with ties allowed when several branches meet at the same value.

The diagonal specialisation $p=q=s$ makes this branch selection explicit.  In that case the bounded-height denominator polynomials reduce to Chebyshev polynomials, as in the classical finite-height Catalan continued fractions.  For $0<s<1$ the open-square radius theorem gives a lower envelope of explicit branches, with adjacent crossings accumulating at $(s,x)=(1,1/4)$.  The resulting envelope satisfies
\begin{equation}
  x_c(s)=\frac14+\frac{3\pi^{2/3}}4(1-s)^{2/3}+o((1-s)^{2/3}),
  \qquad s\uparrow1.
\end{equation}
We refer to this as a branch-envelope accumulation law; it concerns the lower envelope of branch locations rather than coefficient-asymptotic transfer.

The height expansion also has a symmetry at the level of the chosen unreduced denominators.  The full generating function is asymmetric under $p\leftrightarrow q$, while the denominator factors of the height summands can be arranged into unreduced polynomials $\R_h(x;p,q)$ satisfying $\R_h(x;p,q)=\R_h(x;q,p)$.  This symmetry organises the branch loci at the denominator level rather than at the coefficient level.

The paper is organised as follows.  Section~\ref{sec:model} defines the statistics and the four-variable generating function.  Section~\ref{sec:height-restricted} recalls the height-restricted area-weighted Dyck path polynomials used in the solution.  Section~\ref{sec:exact-solution} gives the exact height-decomposed generating function and isolates the analytic issue caused by the infinite sum.  Section~\ref{sec:singularity-symmetry} proves the denominator symmetry.  Section~\ref{sec:diagonal} treats the equal-weight hull-area specialisation, including the Chebyshev branch formulae, branch crossings and accumulation at $(1,1/4)$.  Section~\ref{sec:two-weight} proves the open-square radius theorem, using the weighted-path reformulation, the Perron-root log-concavity result, neighbouring-branch selection, and the normal-convergence estimate.  Section~\ref{sec:discussion} concludes; the first-moment consistency check and two auxiliary derivations are recorded in the appendices.

\section{The model}
\label{sec:model}

A Dyck path is a directed walk on $\bbZ^2$ starting at $(0,0)$, ending on the line $y=0$, never visiting a vertex with negative $y$-coordinate, and using steps $(1,1)$ and $(1,-1)$ \cite{deutsch1999}.  If $\pi$ has $2n$ steps, we call $n=n(\pi)$ its semi-length.  Its height $h=h(\pi)$ is the maximum $y$-coordinate attained by the path.

We use the \emph{proper area} $a(\pi)$, defined as the sum of the starting heights of all steps of the path.  Equivalently, this is the number of triangular plaquettes below the path.  This differs by a simple affine change from the diamond-area convention used in some treatments of area-weighted Dyck paths: if $d(\pi)$ is the number of diamond plaquettes below the path, then
\begin{equation}
  a(\pi)=2d(\pi)+n(\pi).
\end{equation}

We now give a formal definition of the upper hull and water capacity.  Let $h=h(\pi)$.  For each $1\leq k\leq h$, mark the first up-step of $\pi$ from level $k-1$ to level $k$ and the last down-step of $\pi$ from level $k$ to level $k-1$.  These marked steps occur in the order
\[
  U_1,U_2,\ldots,U_h,D_h,\ldots,D_2,D_1.
\]
Between consecutive marked steps there are gaps at fixed ceiling level $k$: for $1\leq k<h$ there is one left gap after $U_k$ and before $U_{k+1}$ and one right gap after $D_{k+1}$ and before $D_k$, while for $k=h$ there is a single top gap after $U_h$ and before $D_h$.  Each such gap starts and ends at level $k$ and never rises above level $k$.

The \emph{upper hull} is the piecewise-linear water-filling envelope obtained by replacing every gap at level $k$ by the horizontal segment at height $k$, while keeping the marked up- and down-steps.  Reflecting a gap at level $k$ in the line $y=k$ and translating it to the horizontal axis gives an ordinary Dyck path of height at most $k$.  The \emph{water capacity} $w(\pi)$ is the sum, over all gaps, of the proper areas of these reflected Dyck paths.  Equivalently, $w(\pi)$ is the number of triangular plaquettes lying above $\pi$ and below the upper hull.  The hull area, denoted by $\ell(\pi)$, is
\begin{equation}
  \ell(\pi)=a(\pi)+w(\pi).
\end{equation}
When all gaps are empty, the upper hull is the minimal hull $U^hD^h$.  Figure~\ref{fig:model} illustrates the area, retained water, and hull area for a small path.

\begin{figure}[!htbp]
\centering
\includegraphics[width=.70\textwidth]{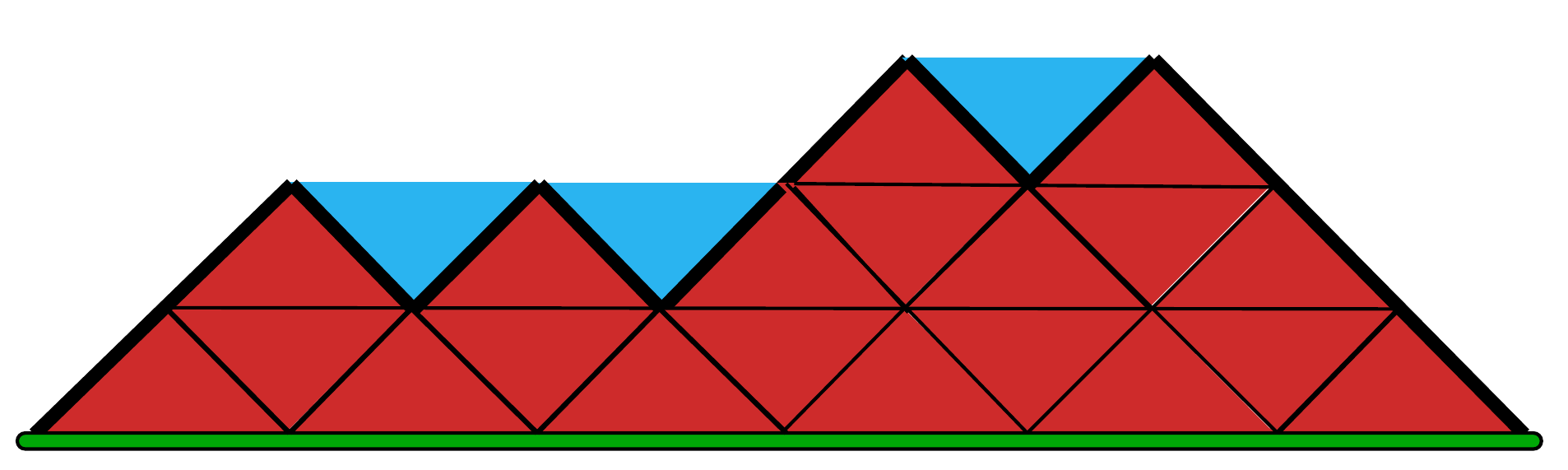}
\caption{A Dyck path of semi-length $n=6$, height $h=3$, area $a=20$ below the path, and water capacity $w=3$.  The area plaquettes indicate proper area, and the water plaquettes indicate retained water below the upper hull.  The hull area is $a+w=23$.}
\label{fig:model}
\end{figure}

Let $\Dyck$ denote the set of all Dyck paths.  For indeterminates $x,y,p,q$, marking semi-length, height, water capacity and area, respectively, define the generating function
\begin{equation}
  G(x,y,p,q)=\sum_{\pi\in\Dyck} x^{n(\pi)}y^{h(\pi)}p^{w(\pi)}q^{a(\pi)}.
\end{equation}
For $N\geq0$, we also use the height-truncated generating function
\begin{equation}
  G_N(x,y,p,q)=\sum_{\substack{\pi\in\Dyck\\ h(\pi)\leq N}} x^{n(\pi)}y^{h(\pi)}p^{w(\pi)}q^{a(\pi)}.
\end{equation}
The untruncated function is the formal limit $G=\lim_{N\to\infty}G_N$; unless analytic convergence is explicitly invoked, identities involving $G$ are understood coefficientwise as formal power series in $x$.

Two small examples fix the conventions.  The contribution of paths of height exactly one is
\begin{equation}
  G_1(x,y,p,q)=\sum_{n\geq1}x^nyp^{n-1}q^n
  =\frac{xyq}{1-pqx},
\end{equation}
because these paths are $(UD)^n$, with area $n$ and capacity $n-1$.  At semi-length two there are two paths:
\[
  \mathrm{UUDD}:\quad (a,w)=(4,0),
  \qquad
  \mathrm{UDUD}:\quad (a,w)=(2,1).
\]
Thus
\begin{equation}
  [x^2]G(x,1,p,q)=q^4+pq^2.
\end{equation}
The first few terms at $y=1$ are
\begin{align}
G(x,1,p,q)
&=1+qx+(q^4+pq^2)x^2\nonumber\\
&\quad +(q^9+pq^7+2pq^5+p^2q^3)x^3+O(x^4).
\end{align}
Here $O(x^4)$ denotes terms of degree at least $4$ in $x$, with coefficients in $p$ and $q$.

The specialisation $p=1$ gives area-weighted Dyck paths.  The specialisation $q=1$ gives capacity-weighted Dyck paths.  The diagonal specialisation $p=q=s$ gives
\begin{equation}
  G(x,y,s,s)=\sum_{\pi\in\Dyck} x^{n(\pi)}y^{h(\pi)}s^{\ell(\pi)},
\end{equation}
which is the generating function for Dyck paths weighted by hull area.

Table~\ref{tab:notation} collects the main notation used below.

\begin{table}[!htbp]
\centering
\small
\renewcommand{\arraystretch}{1.15}
\begin{tabular}{p{0.27\textwidth}p{0.63\textwidth}}
\hline
Symbol & Meaning \\
\hline
$\Dyck$ & set of Dyck paths \\
$n(\pi)$, $h(\pi)$ & semi-length and height of $\pi$ \\
$a(\pi)$, $w(\pi)$, $\ell(\pi)$ & proper area, water capacity, and hull area $a(\pi)+w(\pi)$ \\
$x,y,p,q$ & weights for semi-length, height, capacity, and area \\
$G$, $G_N$, $G_h$ & full, height-truncated, and exact-height generating functions \\
$D_h(X,Q)$ & bounded-height Dyck-path area generating function \\
$P_h(X,Q)$ & denominator polynomial for $D_h(X,Q)$ \\
$\R_h(x;p,q)$ & chosen unreduced symmetric denominator for height $h$ \\
$Q=p^2/q^2$, $r=q^2/p^2$ & reciprocal parameters used in the formal and spectral forms \\
$\Lambda_h(r)$ & square of the Perron eigenvalue of the weighted $h$-vertex path matrix \\
$x_k(p,q)$ & positive denominator branch indexed by insertion level $k$ \\
$R(p,q)$, $x_c(s)$ & open-square radius and its diagonal specialisation \\
\hline
\end{tabular}
\caption{Notation used in the model and in the later denominator and branch analysis.}
\label{tab:notation}
\end{table}

\section{Height-restricted Dyck paths}
\label{sec:height-restricted}

We use the following standard bounded-height Dyck-path construction.  For a non-negative integer $N$ and an integer $m$ with $0\leq m\leq N$, write $\qbinom{N}{m}{Q}$ for the Gaussian polynomial, characterised for generic $Q$ by
\begin{equation}
  \qbinom{N}{m}{Q}=\frac{(Q;Q)_N}{(Q;Q)_m(Q;Q)_{N-m}},
  \qquad (z;Q)_N=\prod_{j=0}^{N-1}(1-zQ^j),
\end{equation}
with $(z;Q)_0=1$ \cite[Appendix~I]{gasper2004,andrews1999}.  The quotient is understood polynomially in $Q$; in particular $\qbinom{N}{m}{1}=\binom{N}{m}$.  We set the $Q$-binomial coefficient to zero when $m<0$ or $m>N$.

For $h\geq0$ and auxiliary variables $X$ and $Q$, define the denominator polynomial
\begin{equation}
  P_h(X,Q)=\sum_{m=0}^{\lfloor h/2\rfloor}(-X)^m Q^{m(m-1)}\qbinom{h-m}{m}{Q}.
  \label{eq:P-def}
\end{equation}
Thus $P_0=P_1=1$, and for $h\geq2$ these polynomials satisfy
\begin{align}
  P_h(X,Q)&=P_{h-1}(QX,Q)-XP_{h-2}(Q^2X,Q),\label{eq:P-rec-1}\\
  P_h(X,Q)&=P_{h-1}(X,Q)-XQ^{h-2}P_{h-2}(X,Q).\label{eq:P-rec-2}
\end{align}
Both recurrences are the direct coefficientwise consequences of the two Gaussian Pascal identities \cite[Appendix~I]{gasper2004,andrews1999}.  This is the usual denominator sequence for the finite continued fractions enumerating height-restricted Dyck paths.

For $h\geq0$, let $D_h(X,Q)$ be the generating function for Dyck paths of height at most $h$, with $X$ marking semi-length and $Q$ marking diamond area.  Then
\begin{equation}
  D_h(X,Q)=\frac{P_h(QX,Q)}{P_{h+1}(X,Q)}.
  \label{eq:height-restricted-dyck}
\end{equation}
Since $P_{h+1}(0,Q)=1$, the quotient is interpreted as a formal power series in $X$; for fixed complex $Q$ it is also a rational function of $X$.  We use the form from \cite[Corollary~2]{owczarek2012}, equivalent to the usual finite continued fraction for height-restricted Dyck paths \cite{flajolet1980,goulden1986}.  Equivalently, the first-return decomposition gives $D_h(X,Q)=1/(1-XD_{h-1}(QX,Q))$, and \eqref{eq:height-restricted-dyck} follows from \eqref{eq:P-rec-1} by induction.

The first cases are
\begin{align*}
  &P_0(X,Q)=P_1(X,Q)=1,
  \qquad P_2(X,Q)=1-X,\\
  &P_3(X,Q)=1-(1+Q)X,
\end{align*}
and hence
\begin{equation*}
  D_0(X,Q)=1,
  \qquad
  D_1(X,Q)=\frac{1}{1-X},
  \qquad
  D_2(X,Q)=\frac{1-QX}{1-(1+Q)X}.
\end{equation*}
These examples are included only to fix the indexing convention for the denominator polynomials used below.

\section{Exact generating function}
\label{sec:exact-solution}

We now insert the height-restricted Dyck paths from \eqref{eq:height-restricted-dyck} into the upper-hull gaps defined in Section~\ref{sec:model}.  The following lemma is the combinatorial step; the product formula after it is the primary exact form of the generating function.

\begin{lemma}[Height-$h$ hull decomposition]
\label{lem:height-h-hull-decomposition}
A Dyck path of height exactly $h$ is uniquely obtained from the minimal hull $U^hD^h$ by inserting, for each $1\leq k<h$, two inverted Dyck paths of height at most $k$, and, for $k=h$, one inverted Dyck path of height at most $h$.  Empty insertions are allowed.  The minimal hull contributes $x^hy^hq^{h^2}$.  An inserted path at level $k$, with semi-length $m$ and diamond area $d$, contributes
\[
  x^m p^{m+2d} q^{(2k-1)m-2d},
\]
and hence has generating function
\[
  D_k\!\left(q^{2k-1}px,\frac{p^2}{q^2}\right).
\]
\end{lemma}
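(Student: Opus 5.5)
The proof has three parts: build a bijection from the marked-step structure of Section~\ref{sec:model}, compute the weights of the hull and of an inserted gap, and then sum over the gap shapes.

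\emph{Bijection.} Given a Dyck path $\pi$ of height exactly $h$, take its marked steps $U_1,\ldots,U_h,D_h,\ldots,D_1$. These cut $\pi$ into $2h-1$ gaps: a left and a right gap at each level $k<h$, and one top gap at level $h$. Each gap at level $k$ starts and ends at level $k$ and never rises above $k$. Reflecting it in $y=k$ gives an ordinary Dyck path of height at most $k$.

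I must also check that a gap never falls below level $k-1$. Suppose a left gap between $U_k$ and $U_{k+1}$ went to level $k-2$. Returning to level $k$ before $U_{k+1}$ would require an up-step from $k-1$ to $k$. That step would come after $U_k$, which is allowed, but the path would also have to revisit level $k-1$ while staying at or below $k$. This is harmless, because the constraint that matters is height at most $k$ of the reflected path. The reflected path stays above the axis exactly because the gap never rises above $k$. So no lower bound on the gap is needed, and the reflected path is a genuine Dyck path of height at most $k$.

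Conversely, start from $U^hD^h$ and insert, at the vertex at level $k$ after $U_k$ and before $D_k$, arbitrary inverted Dyck paths of height at most $k$. The result is a Dyck path of height exactly $h$. I then verify that the first up-step from $k-1$ to $k$ is still $U_k$. An inserted inverted path at level $j<k$ stays at or below $j$, so it cannot reach level $k$ before $U_k$. The symmetric argument shows that the last down-step from $k$ to $k-1$ is still $D_k$. The marked steps are therefore recovered, and the two maps are mutually inverse. This is the combinatorial core of the proof. The only subtlety is confirming that the first-passage and last-passage steps are preserved, and that is the step I would write most carefully.

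\emph{Weights.} The hull $U^hD^h$ has semi-length $h$ and height $h$. Its proper area is $2\sum_{j=0}^{h-1}j+h=h^2$. Its capacity is $0$. This gives the weight $x^hy^hq^{h^2}$.

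Now consider an inserted gap at level $k$ whose reflected Dyck path has semi-length $m$ and diamond area $d$. Its proper area is $a'=2d+m$, and by definition this is its capacity contribution, giving $p^{m+2d}$.

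For the area contributed below the path, the inserted $2m$ steps start at heights $k-(\text{starting height in the reflected path})$, with a shift of one for down-steps. A cleaner route is to count plaquettes. The inserted segment occupies horizontal width $2m$ at ceiling level $k$, so the region under level $k$ over that width contains $(2k)m$ triangles in the proper-area counting, namely $2m$ steps each contributing height $k$ in the flat count. The path area beneath equals that region minus the water, $km\cdot 2-(m+2d)=(2k-1)m-2d$. To confirm this with step heights: an inserted up-step starting at reflected height $t$ starts at actual height $k-t$, and so on. Summing over all steps gives $2km-a'$ with $a'=2d+m$, which is again $(2k-1)m-2d$.

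Inserting the gap also adds nothing to the height, since the gap stays at or below $k\le h$. The total weight of the gap is therefore $x^mp^{m+2d}q^{(2k-1)m-2d}=(q^{2k-1}px)^m(p^2/q^2)^d$.

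\emph{Summation.} Summing over reflected paths of height at most $k$, and applying \eqref{eq:height-restricted-dyck}, gives $D_k(q^{2k-1}px,p^2/q^2)$ as a formal power series. All four statistics are additive over the hull and the gaps, so the weights multiply and the stated decomposition follows.
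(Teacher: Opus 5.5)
Your proposal is correct and follows essentially the paper's route: the marked-step skeleton $U^hD^h$, reflection of the $2h-1$ gaps into Dyck paths of height at most $k$, and the strip count $2km-(m+2d)=(2k-1)m-2d$, with your explicit recovery of the marked steps in the inverse map and your step-height check (each reflected starting height $t$ corresponds to actual starting height $k-t$) adding detail the paper leaves implicit. One expository correction: the paragraph about gaps falling below level $k-1$ starts from a false premise, since a level-$k$ gap can descend all the way to level $0$ (e.g.\ the left gap $DDUU$ at level $2$ in $UUDDUUUDDD$); the bound ``height at most $k$'' for the reflected path comes precisely from the Dyck condition that the gap stays at or above level $0$, so state that rather than saying no lower bound is needed.
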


\begin{proof}
Mark the first up-step from level $k-1$ to level $k$ for each $1\leq k\leq h$, and the last down-step from level $k$ to level $k-1$ for each $1\leq k\leq h$.  These marked steps occur in the order $U_1,\ldots,U_h,D_h,\ldots,D_1$ and form the contracted skeleton $U^hD^h$.  The gaps between consecutive marked steps on the left and right sides at level $k<h$, and the single top gap at level $h$, are paths starting and ending at level $k$ and never rising above level $k$.  Reflecting such a gap in the line $y=k$ gives an ordinary Dyck path of height at most $k$, and the construction is reversible by inserting arbitrary such reflected paths.

The skeleton has semi-length $h$, height $h$, zero water capacity, and proper area $h^2$, giving the weight $x^hy^hq^{h^2}$.  Now consider one reflected gap path at level $k$.  If the reflected ordinary Dyck path has semi-length $m$ and diamond area $d$, then its proper area is $m+2d$.  This proper area is exactly the water capacity contributed by the original gap.  The horizontal strip below the level-$k$ hull over a base of length $2m$ contains $2km$ triangular plaquettes, that is, it has proper area $2km$.  Of these triangular plaquettes, $m+2d$ are occupied by water.  The remaining proper area below the original gap is therefore
\[
  2km-(m+2d)=(2k-1)m-2d.
\]
Thus the insertion contributes
\[
  x^m p^{m+2d}q^{(2k-1)m-2d}
  =(q^{2k-1}px)^m\left(\frac{p^2}{q^2}\right)^d.
\]
The factor $q^{2k-1}p$ is the weight per inserted semi-length unit, while the factor $p^2/q^2$ transfers one diamond-area unit from ordinary proper area to water capacity.  Summing over all reflected gaps of height at most $k$ gives $D_k(q^{2k-1}px,p^2/q^2)$.
\end{proof}

Let $G_h(x,y,p,q)$ be the contribution from paths of height exactly $h$.  The lemma gives, for $h\geq1$,
\begin{equation}
  G_h(x,y,p,q)=x^hy^hq^{h^2}
  \left[\prod_{k=1}^{h-1}D_k\!\left(q^{2k-1}px,p^2/q^2\right)\right]^2
  D_h\!\left(q^{2h-1}px,p^2/q^2\right),
  \label{eq:Gh-D}
\end{equation}
with the empty product interpreted as one.  Therefore the exact generating function is
\begin{equation}
  G(x,y,p,q)=1+\sum_{h\geq 1}G_h(x,y,p,q).
  \label{eq:G-sum-Gh}
\end{equation}
This is the main formal expression: each height summand is a product of standard bounded-height Dyck-path generating functions, with the substitution
\begin{equation}
  X=q^{2k-1}px,
  \qquad
  Q=\frac{p^2}{q^2}
  \label{eq:level-substitution}
\end{equation}
at insertion level $k$.

The coordinate boundaries are immediate from the definition and the hull decomposition.  Since every non-empty Dyck path has positive proper area,
\begin{equation}
  G(x,y,p,0)=1.
\end{equation}
Setting $p=0$ suppresses every positive-capacity insertion, leaving only the minimal hull $U^hD^h$ at height $h$, so
\begin{equation}
  G(x,y,0,q)=1+\sum_{h\geq1}(xy)^h q^{h^2}.
\end{equation}

Equivalently, substituting \eqref{eq:height-restricted-dyck} into \eqref{eq:Gh-D} gives an expression in the denominator polynomials $P_h$.

\begin{proposition}[Exact solution, expanded form]
Let $P_h$ be defined by \eqref{eq:P-def}.  For $q\ne0$, in the localized formal power-series ring with $q$ invertible, the height-truncated generating function is
\begin{equation}
  G_N(x,y,p,q)=
  \sum_{h=0}^{N}
  \frac{x^hy^hq^{h^2}
  \displaystyle\prod_{k=1}^{h}
  \frac{P_k(q^{2k-3}p^3x,p^2/q^2)^2}
       {P_k(q^{2k-3}px,p^2/q^2)^2}}
  {P_h(q^{2h-3}p^3x,p^2/q^2)
   P_{h+1}(q^{2h-1}px,p^2/q^2)}.
  \label{eq:exact-solution}
\end{equation}
The degenerate boundary $q=0$ is not represented by the displayed ratios and has $G_N(x,y,p,0)=1$.  At $p=0$, $G_N(x,y,0,q)=1+\sum_{h=1}^{N}(xy)^h q^{h^2}$.  Products over empty index sets in \eqref{eq:exact-solution} are interpreted as $1$.  For the $h=0$ term, the arguments of $P_0$ and $P_1$ are immaterial since $P_0=P_1=1$.  The unbounded-height generating function is the formal limit $N\to\infty$.
\end{proposition}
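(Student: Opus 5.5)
The plan is a direct substitution of \eqref{eq:height-restricted-dyck} into the product formula \eqref{eq:Gh-D}, followed by an index shift that telescopes the denominators. Truncating \eqref{eq:G-sum-Gh} at height $N$ gives $G_N=1+\sum_{h=1}^N G_h$. The $h=0$ term of \eqref{eq:exact-solution} equals $1$, because the product is empty and $P_0=P_1=1$. So it suffices to show that for each $h\geq1$ the $h$-th summand of \eqref{eq:exact-solution} equals $G_h$ as given by \eqref{eq:Gh-D}.

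First I will set up abbreviations in the level substitution \eqref{eq:level-substitution}. Put $Q=p^2/q^2$ and $X_k=q^{2k-1}px$, so that $QX_k=q^{2k-3}p^3x$. Define
\[
  A_j=P_j(q^{2j-3}px,Q),\qquad B_j=P_j(q^{2j-3}p^3x,Q).
\]
Then \eqref{eq:height-restricted-dyck} gives
\[
  D_k(X_k,Q)=\frac{P_k(QX_k,Q)}{P_{k+1}(X_k,Q)}=\frac{B_k}{A_{k+1}}.
\]
The key observation is that the denominator argument $q^{2k-1}px$ equals $q^{2(k+1)-3}px$, which is exactly the argument that defines $A_{k+1}$.

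Next I will substitute these quotients into \eqref{eq:Gh-D}, which gives
\[
  G_h=x^hy^hq^{h^2}\,\frac{\prod_{k=1}^{h-1}B_k^2}{\prod_{k=2}^{h}A_k^2}\cdot\frac{B_h}{P_{h+1}(q^{2h-1}px,Q)}.
\]
Since $A_1=P_1=1$, the denominator product can be extended to $\prod_{k=1}^hA_k^2$. Writing $\prod_{k=1}^{h-1}B_k^2\cdot B_h=\prod_{k=1}^hB_k^2/B_h$ then produces exactly the $h$-th summand of \eqref{eq:exact-solution}: the denominator is $B_h\,P_{h+1}(q^{2h-1}px,Q)$ and the product is $\prod_{k=1}^h B_k^2/A_k^2$.

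The main point needing care is the formal setting rather than the algebra. The arguments involve $q^{2k-3}$, which equals $q^{-1}$ at $k=1$, and $Q=p^2/q^2$, so the expressions live in power series in $x$ over a ring in which $q$ is inverted. Each $P_j(\cdot,Q)$ has constant term $1$ in $x$, since its argument is a multiple of $x$ and $P_j(0,Q)=1$. Hence each $P_j$ is invertible, all the quotients are legitimate, and the manipulations above are identities of formal power series. The cancellation of $B_h$ against its square is valid for the same reason.

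The remaining boundary statements follow directly. At $q=0$ the identity $G_N=1$ comes from positivity of proper area for nonempty paths. At $p=0$ only the minimal hulls $U^hD^h$ survive, giving $G_N=1+\sum_{h=1}^N(xy)^hq^{h^2}$. The unbounded-height statement follows by letting $N\to\infty$ coefficientwise: $G_h$ is $x^h$ times a power series, so each coefficient of $x^n$ stabilises once $N\geq n$.
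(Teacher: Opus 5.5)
Your proposal is correct and follows essentially the same route as the paper's proof: substitute $D_k=P_k(QX_k,Q)/P_{k+1}(X_k,Q)$ into the product formula \eqref{eq:Gh-D}, shift the denominator index using $P_1=1$, and absorb the single top-level numerator factor. Your $A_j$ and $B_j$ are the paper's $B_j$ and $A_j$ with the names swapped, and your remarks on invertibility (constant term $1$ in the $q$-localized power-series ring) and on coefficientwise stabilisation only make explicit what the paper leaves implicit.
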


\begin{proof}
By Lemma~\ref{lem:height-h-hull-decomposition}, $G_h$ is given by \eqref{eq:Gh-D}.  Using \eqref{eq:height-restricted-dyck}, put
\begin{equation}
  A_k=P_k(q^{2k-3}p^3x,p^2/q^2),
  \qquad
  B_k=P_k(q^{2k-3}px,p^2/q^2).
\end{equation}
Then
\begin{equation}
  D_k\!\left(q^{2k-1}px,p^2/q^2\right)=\frac{A_k}{B_{k+1}},
\end{equation}
with $B_1=1$.  Multiplying over levels $1\leq k<h$ twice and over $k=h$ once gives \eqref{eq:exact-solution}, after relabelling the denominator product.  The term $h=0$ is the empty path.
\end{proof}

The identities above are first formal power-series identities.  For each coefficient of $x^n$, only finitely many heights contribute, so the infinite height sum is algebraically well defined.  For fixed non-zero complex $p$ and $q$, the same expression may also be viewed as an infinite sum of meromorphic functions of $x$.  Singularities can then come either from zeros of denominator factors in individual height summands or from failure of convergence of the infinite sum.  In the open square $0<p,q<1$, Proposition~\ref{prop:normal-convergence} below gives normal convergence up to the first positive termwise denominator singularity, eliminating this second source in that range.

\section{Symmetric denominator factors}
\label{sec:singularity-symmetry}

The full generating function is asymmetric in $p$ and $q$, as is clear from the definitions of area and water capacity.  Nevertheless, the rational summands in the height expansion admit a symmetric arrangement of the denominators.  We fix the following convention for the related notions used below.

Throughout this section, $\R_h$ denotes a chosen \emph{unreduced denominator representation}: it is the product of denominator factors obtained from the representation of the height-$h$ summand before cancellations with the numerator are made.  Its zero set is the corresponding denominator locus.  An actual pole of a height summand is a zero of this denominator representation which survives cancellation with the numerator.  For positive parameters, a positive denominator branch is a positive zero in the length variable $x$ arising from one of these factors, and an active branch is one which attains the first positive denominator value relevant to the radius theorem proved later.  The result in this section concerns this unreduced denominator representation, rather than the full generating function or its coefficients.

The required algebraic input is the inversion identity, valid for $Q\ne0$ and $0\leq m\leq n$,
\begin{equation}
  \qbinom{n}{m}{Q^{-1}}=Q^{-m(n-m)}\qbinom{n}{m}{Q},
\end{equation}
which implies, for $h\geq0$ and $Q\ne0$,
\begin{equation}
  P_h(X,Q)=P_h(Q^{h-2}X,Q^{-1}).
  \label{eq:P-inversion}
\end{equation}
The cases $h=0,1$ are trivial.  Thus the denominator identity below is first an identity for $pq\ne0$.

For $pq\ne0$ and $h\geq0$, define
\begin{equation}
  \R_h(x;p,q)=
  \left[\prod_{k=0}^{h}P_k(q^{2k-3}px,p^2/q^2)^2\right]
  P_{h+1}(q^{2h-1}px,p^2/q^2),
  \label{eq:R-def}
\end{equation}
and set $\R_{-1}(x;p,q)=1$.  The factors with $k=0$ or $k=1$ are constant whenever the displayed negative powers of $q$ occur.  Applying \eqref{eq:P-inversion} factor by factor gives
\begin{equation}
  \R_h(x;p,q)=\R_h(x;q,p).
  \label{eq:R-symmetry}
\end{equation}
Moreover, \eqref{eq:exact-solution} can be rewritten as
\begin{equation}
  G_N(x,y,p,q)=\sum_{h=0}^{N}
  x^hy^hq^{h^2}\frac{\R_{h-1}(p^2x;p,q)}{\R_h(x;p,q)},
  \label{eq:G-R-form}
\end{equation}
where $\R_{-1}$ is used only in the $h=0$ term.

For the first few values, these denominators are
\begin{equation}
  \R_0(x;p,q)=1,
  \qquad
  \R_1(x;p,q)=1-pqx,
\end{equation}
and
\begin{equation}
  \R_2(x;p,q)=(1-pqx)^2\bigl(1-pq(p^2+q^2)x\bigr).
\end{equation}
The symmetry in $p$ and $q$ is visible already in these examples, but the examples should also be read with the convention above: zeros of $\R_h$ are candidate denominator zeros rather than guaranteed uncancelled poles of the corresponding summand.

It follows from \eqref{eq:R-symmetry} that the termwise denominator locus in the $x$-plane is symmetric under $p\leftrightarrow q$.  In particular, if, for positive $p$ and $q$ in a regime where the minimum exists, $x_{\mathrm{term}}(p,q)$ denotes the smallest positive $x$-value at which the unreduced denominator representation of some height summand vanishes, then
\begin{equation}
  x_{\mathrm{term}}(p,q)=x_{\mathrm{term}}(q,p).
  \label{eq:xc-symmetry}
\end{equation}
Equivalently, outside such a regime one may use the corresponding infimum.  The open-square radius theorem below identifies when the active positive branch from this denominator family controls the full height sum.

\begin{remark}[Boundary values]
Although \eqref{eq:R-def} is written for $pq\ne0$, the factors extend polynomially in $x,p,q$.  On the coordinate axes this extension gives $\R_h(x;p,q)=1$.  This boundary convention is algebraic only; the analytic radius theorem in Section~\ref{sec:two-weight} is stated for $0<p,q<1$.
\end{remark}

\section{The diagonal hull-area specialisation}
\label{sec:diagonal}

We now set $p=q=s$.  This specialisation is explicit enough to display the branch selection that will be treated in general in Section~\ref{sec:two-weight}.  The finite-height ingredient itself is classical: when the second argument of $P_h$ in \eqref{eq:exact-solution} is $1$, the bounded-height denominator polynomials reduce to Chebyshev polynomials.  Let $U_h$ denote the Chebyshev polynomial of the second kind.  Then
\begin{equation}
  P_h(X,1)=\sum_{m=0}^{\lfloor h/2\rfloor}(-X)^m\binom{h-m}{m}
  =X^{h/2}U_h\!\left(\frac{1}{2\sqrt X}\right),
  \label{eq:P-cheb}
\end{equation}
Indeed, both sides of \eqref{eq:P-cheb} have initial values $1,1$ for $h=0,1$ and satisfy the same recurrence
\[
  F_h(X)=F_{h-1}(X)-XF_{h-2}(X),
\]
using \eqref{eq:P-rec-2} at $Q=1$ and the three-term recurrence for $U_h$.  This is an identity in $X$ after cancellation of half-powers; for $X>0$ the positive square root is used.  Equation~\eqref{eq:exact-solution} becomes the following explicit Chebyshev form.

\begin{proposition}[Diagonal hull-area generating function]
For $p=q=s$ with $s\ne0$,
\begin{equation}
G(x,y,s,s)=\frac{1}{\sqrt x}\sum_{h\geq0}
\frac{y^h
\displaystyle\prod_{k=0}^{h}
\left[
\frac{U_k\!\left(s^{-k}/(2\sqrt x)\right)}
     {U_k\!\left(s^{1-k}/(2\sqrt x)\right)}
\right]^2}
{U_h\!\left(s^{-h}/(2\sqrt x)\right)
 U_{h+1}\!\left(s^{-h}/(2\sqrt x)\right)}.
\label{eq:cheb-gf}
\end{equation}
The square-root expression is shorthand for the corresponding polynomial identity obtained from \eqref{eq:P-cheb}; the apparent singularity at $x=0$ cancels termwise.  For analytic statements near positive $x$, the positive branch of $\sqrt{x}$ is used.  At $s=0$, the boundary value is $G(x,y,0,0)=1$.
\end{proposition}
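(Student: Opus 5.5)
The plan is to specialise the expanded exact solution \eqref{eq:exact-solution} at $p=q=s$ and convert every denominator polynomial into Chebyshev form with \eqref{eq:P-cheb}. The only real work is bookkeeping of the powers of $s$ and $\sqrt x$. At $p=q=s$ the second argument $p^2/q^2$ equals $1$, so every factor has the form $P_k(X,1)=X^{k/2}U_k(1/(2\sqrt X))$. The first arguments become
\[
  q^{2k-3}p^3x=s^{2k}x,\qquad q^{2k-3}px=s^{2k-2}x,\qquad q^{2h-1}px=s^{2h}x .
\]

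I will convert the factors in three groups, using the positive root for $x>0$ so that $\sqrt{s^{2k}x}=s^k\sqrt x$.

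\emph{Numerator factor.} We get $P_k(s^{2k}x,1)=s^{k^2}x^{k/2}\,U_k\bigl(s^{-k}/(2\sqrt x)\bigr)$.

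\emph{Denominator factor inside the product.} We get $P_k(s^{2k-2}x,1)=s^{k(k-1)}x^{k/2}\,U_k\bigl(s^{1-k}/(2\sqrt x)\bigr)$. Hence the $k$th squared ratio in the product equals $s^{2k}$ times the squared Chebyshev ratio in \eqref{eq:cheb-gf}. Multiplying over $1\le k\le h$ gives the monomial $s^{h(h+1)}$. The $k=0$ factor in \eqref{eq:cheb-gf} is $1$, since $U_0=1$, so extending the product to start at $k=0$ costs nothing.

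\emph{Outer denominator.} We get $P_h(s^{2h}x,1)=s^{h^2}x^{h/2}U_h\bigl(s^{-h}/(2\sqrt x)\bigr)$ and $P_{h+1}(s^{2h}x,1)=s^{h(h+1)}x^{(h+1)/2}U_{h+1}\bigl(s^{-h}/(2\sqrt x)\bigr)$. Their monomial prefactor is therefore $s^{2h^2+h}x^{h+1/2}$.

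Next I collect the monomials. The numerator of the $h$th summand carries $x^hy^hs^{h^2}\cdot s^{h(h+1)}=x^hy^hs^{2h^2+h}$. Dividing by $s^{2h^2+h}x^{h+1/2}$ leaves exactly $y^h/\sqrt x$, so each summand matches \eqref{eq:cheb-gf}. The $h=0$ term gives $1/(\sqrt x\,U_0U_1(1/(2\sqrt x)))=1$, as required, since $U_1(t)=2t$. Letting $N\to\infty$ coefficientwise gives the full series.

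The main point needing care is interpretive rather than computational. The half-powers $x^{k/2}$ must be read through the polynomial identity \eqref{eq:P-cheb}: each $X^{k/2}U_k(1/(2\sqrt X))$ is a polynomial in $X$. So every summand of \eqref{eq:cheb-gf} is just \eqref{eq:exact-solution} rewritten, and the apparent singularity at $x=0$ cancels termwise. I will state explicitly that the identity holds as formal power series in $x$ for $s\ne0$ (where \eqref{eq:exact-solution} is valid), and analytically with the positive branch of the root for $x>0$. Finally, the boundary value at $s=0$ follows from the earlier observation $G(x,y,p,0)=1$ with $p=0$: every non-empty path has positive area.
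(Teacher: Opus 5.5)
Your proposal is correct and takes the same route as the paper: specialise \eqref{eq:exact-solution} at $p=q=s$, so that every second argument becomes $1$, and then convert each $P_k$ using \eqref{eq:P-cheb}. Your monomial bookkeeping matches the computation the paper leaves implicit: the product contributes $s^{h(h+1)}$, the outer denominator contributes $s^{2h^2+h}x^{h+1/2}$, the quotient leaves exactly $y^h/\sqrt x$, and the $h=0$ term is $1$.
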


The formula \eqref{eq:cheb-gf} is intentionally left in the same unreduced denominator convention as in Section~\ref{sec:singularity-symmetry}.  Some displayed denominator zeros are cancelled by numerator factors, and repeated denominator factors may represent the same locus.  The branches below are the positive denominator zeros that remain relevant to the lower envelope after this convention is taken into account.

The zeros of $U_h$ are explicit.  Since
\begin{equation}
  U_h(\cos\theta)=\frac{\sin((h+1)\theta)}{\sin\theta},
\end{equation}
the zeros are obtained at $\theta=j\pi/(h+1)$, $1\leq j\leq h$.  Writing $k\geq1$ for the branch coming from the denominator factor indexed by $k+1$, the closest positive denominator zero gives
\begin{equation}
  x_k(s)=\frac{1}{4s^{2k}\cos^2\left(\frac{\pi}{k+2}\right)},
  \qquad k\geq1.
  \label{eq:xk}
\end{equation}
The first entries are
\begin{center}
\begin{tabular}{c|c}
quantity & value \\
\hline
$x_1(s)$ & $s^{-2}$ \\
$x_2(s)$ & $1/(2s^4)$ \\
$s_1$ & $1/\sqrt2$
\end{tabular}
\end{center}
where $s_1$ is the first adjacent crossing value defined below.  These small cases check the indexing of the branch formula.

The following conclusion about the radius for $0<s<1$ uses the normal-convergence result proved later as Theorem~\ref{thm:open-square-radius}.  Consequently, after applying that theorem, the length radius in the diagonal model is
\begin{equation}
  x_c(s)=\min_{k\geq1} x_k(s),
  \qquad 0<s<1.
  \label{eq:xc-diagonal}
\end{equation}
At $s=1$ this minimum is replaced by the limiting value $x_c(1)=1/4=\inf_{k\geq1}x_k(1)$, in agreement with the Catalan generating function $G(x,1,1,1)$.
For $s>1$, the coefficient of $x^n$ in $G(x,1,s,s)$ is at least the contribution $s^{n^2}$ of the path $U^nD^n$, so the length generating function has zero radius of convergence.  This gives the zero-radius regime.

Consecutive branches $x_k(s)$ and $x_{k+1}(s)$ cross at
\begin{equation}
  s_k=\frac{\cos(\pi/(k+2))}{\cos(\pi/(k+3))},
  \label{eq:sk}
\end{equation}
with corresponding $x$-coordinate $\xi_k=x_k(s_k)$,
\begin{equation}
  \xi_k=\frac{1}{4}
  \left(\frac{\cos(\pi/(k+2))}{\cos(\pi/(k+3))}\right)^{-2k}
  \cos^{-2}\!\left(\frac{\pi}{k+2}\right).
  \label{eq:Xk}
\end{equation}
Therefore the radius is a piecewise analytic lower envelope of branches associated with simple positive denominator zeros.  If one writes the associated grand-canonical free energy as
\begin{equation}
  \kappa(s)=-\log x_c(s),
\end{equation}
then $\kappa$ is continuous at adjacent crossings, while its one-sided derivatives differ because
\begin{equation}
  \frac{\dd}{\dd s}\log x_k(s)=-\frac{2k}{s}.
\end{equation}
Thus the diagonal specialisation has an infinite sequence of first-derivative discontinuities in this free-energy function.  The statement concerns the branch envelope of the radius and leaves coefficient asymptotics across the crossings outside its scope.

The transition points accumulate at $s=1$ and $x=1/4$.  All $O$-terms in the next two displays are as $k\to\infty$, and the final $o$-term is as $s\uparrow1$.  As $k\to\infty$,
\begin{align}
  s_k&=1-\frac{\pi^2}{k^3}+O(k^{-4}),\label{eq:sk-asymp}\\
  \xi_k&=\frac14+\frac{3\pi^2}{4k^2}+O(k^{-3}).\label{eq:Xk-asymp}
\end{align}
To pass from the crossing points to the full lower envelope, put $t=-\log s$.  Then, uniformly for large $k$,
\begin{equation}
  \log(4x_k(s))=2kt-2\log\cos\!\left(\frac{\pi}{k+2}\right)
  =2kt+\frac{\pi^2}{(k+2)^2}+O(k^{-4}).
\end{equation}
The minimising index tends to infinity as $t\downarrow0$, since for every fixed $k$ the second term stays positive, while choosing $k\asymp t^{-1/3}$ makes the right-hand side tend to $0$.  Hence the discrete minimum has the same leading order as the continuous minimum
\begin{equation}
  \min_{u>0}\left(2tu+\frac{\pi^2}{u^2}\right)=3\pi^{2/3}t^{2/3},
\end{equation}
attained at $u=(\pi^2/t)^{1/3}$.  The shift from $k$ to $k+2$ and the $O(k^{-4})$ remainder are $o(t^{2/3})$ at this scale.  Therefore
\begin{equation}
  \min_{k\geq1}\log(4x_k(s))=3\pi^{2/3}t^{2/3}+o(t^{2/3}).
\end{equation}
Since $t\sim1-s$ and $\exp z=1+z+o(z)$, this gives the branch-envelope asymptotic
\begin{equation}
  x_c(s)=\frac14+\frac{3\pi^{2/3}}{4}(1-s)^{2/3}+o\bigl((1-s)^{2/3}\bigr),
  \qquad s\uparrow1.
  \label{eq:xc-diagonal-asymp}
\end{equation}
The exponent $2/3$ agrees with the exponent appearing in the ordinary area-weighted Dyck path transition \cite{haug2015,haug2017}; the statement proved here is the branch-envelope accumulation law above.  Figure~\ref{fig:diagonal-singularities} shows the first branch curves and the accumulation near $(s,x)=(1,1/4)$.

\begin{figure}[t]
\centering
\begin{minipage}{.48\textwidth}
\centering
\includegraphics[width=.95\textwidth]{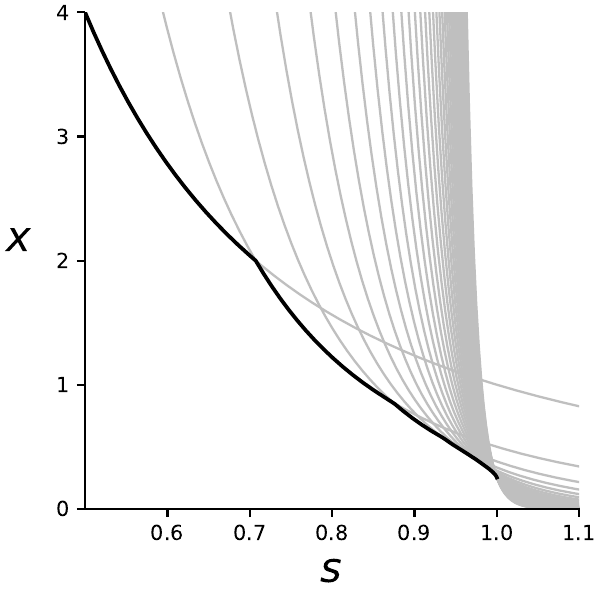}
\end{minipage}\hfill
\begin{minipage}{.48\textwidth}
\centering
\includegraphics[width=.95\textwidth]{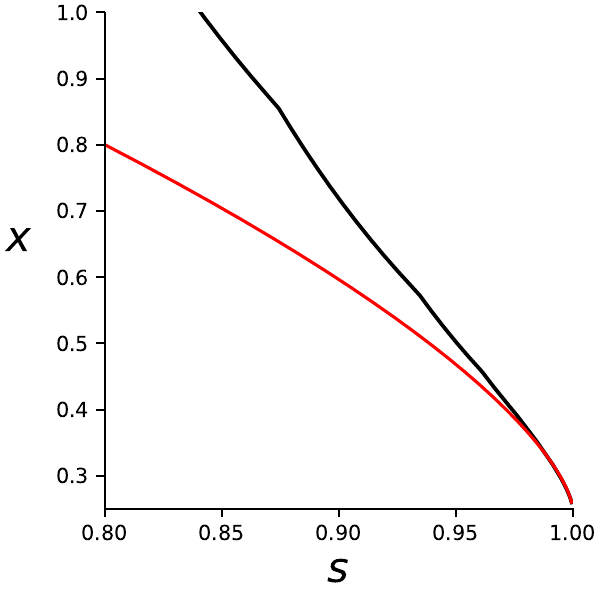}
\end{minipage}
\caption{Positive denominator branches for the diagonal hull-area model.  The horizontal axis in both panels is the diagonal weight $s$.  The radius is the lower envelope of the displayed branches for $0<s<1$.  The right panel zooms near $(s,x)=(1,1/4)$ and compares with the branch-envelope asymptotic \eqref{eq:xc-diagonal-asymp}.}
\label{fig:diagonal-singularities}
\end{figure}

\begin{remark}[First moment at the unweighted point]
A short consistency check at the unweighted point is recorded in \ref{app:first-moment}.  This check is independent of the proof of the open-square radius theorem and confirms that differentiating the diagonal weight recovers the hull-area first moment as the sum of the known proper-area and water-capacity first moments.
\end{remark}

\section{Two-parameter branches and the radius theorem}
\label{sec:two-weight}

We now return to general $p$ and $q$.  The exact expression \eqref{eq:exact-solution} gives an infinite sum of rational functions built from the polynomials $P_h$.  We analyse the denominator branches coming from the factors of the summands and determine the branch selected in the open square.  This section shows that, for $0<p,q<1$ and $y=1$, the infinite height sum is analytic before the first positive denominator branch: the radius of the full generating function is exactly the smallest positive branch value.

Using the symmetric form of the exact solution, and with $pq\ne0$ algebraically or $p,q>0$ for the positive-real branch interpretation, the relevant denominator factor for the $k$th branch can be written as
\begin{equation}
  \Delta_k(x;p,q)=P_{k+1}\!\left(p^{2k-1}qx,\frac{q^2}{p^2}\right),
  \qquad k\geq1.
  \label{eq:Delta-k}
\end{equation}
The corresponding termwise denominator branch surfaces are the algebraic surfaces
\begin{equation}
  \Delta_k(x;p,q)=0.
\end{equation}
By \eqref{eq:R-symmetry}, the family of these surfaces is invariant under $p\leftrightarrow q$.  This identity is the algebraic source of the symmetry of the two-parameter denominator surfaces.  For positive $p$ and $q$, let $x_k(p,q)$ denote the smallest positive real solution of $\Delta_k(x;p,q)=0$.

The first two positive branches are
\begin{equation}
  x_1(p,q)=\frac1{pq},
  \qquad
  x_2(p,q)=\frac1{pq(p^2+q^2)}.
  \label{eq:x1-x2}
\end{equation}
Their relevant positive open-square intersection is the curve
\begin{equation}
  p^2+q^2=1.
  \label{eq:first-general-transition}
\end{equation}
Eliminating $x$ between the second and third denominator factors gives the next adjacent transition curve
\begin{equation}
0=p^4q^4-p^6-2p^4q^2-2p^2q^4-q^6+p^4+2p^2q^2+q^4.
\label{eq:second-general-transition}
\end{equation}
The derivation and a convenient parametrisation of its open-square component are given in \ref{app:second-transition}.  Further transition curves are obtained in the same way, although their algebraic expressions grow rapidly in size.

The recurrence \eqref{eq:P-rec-2} gives a structural interpretation of the termwise branches.  Transfer-matrix and determinant formulations for bounded-height path enumeration are classical; the weighted-path form below is the version needed for the present singularity analysis \cite{bacher2013,ouvry2021,cigler2024}.  For $r>0$, take $A_0(r)$ to be the empty matrix and $A_1(r)$ to be the $1\times1$ zero matrix.  For $h\geq2$, let $A_h(r)$ be the $h\times h$ symmetric tridiagonal matrix with zero diagonal and off-diagonal entries
\begin{equation}
  1,\, r^{1/2},\, r,\, r^{3/2},\,\ldots,\, r^{(h-2)/2},
\end{equation}
where $r^{1/2}$ denotes the positive real square root.
Equivalently,
\begin{equation}
  (A_h(r))_{i,i+1}=(A_h(r))_{i+1,i}=r^{(i-1)/2},
  \qquad 1\leq i\leq h-1,
\end{equation}
and all other entries are zero.  For $h\geq1$, write $\lambda_h(r)$ for the spectral radius of $A_h(r)$; thus $\lambda_1(r)=0$, and for $h\geq2$ this is the Perron eigenvalue in the standard Perron--Frobenius sense \cite[Chapter~8]{hornjohnson2013}.  Put
\begin{equation}
  \Lambda_h(r)=\lambda_h(r)^2.
\end{equation}

\begin{proposition}[Weighted-path representation]
\label{prop:weighted-path}
For $h\geq0$ and $r>0$,
\begin{equation}
  P_h(X,r)=\det(I-\sqrt X\,A_h(r)).
  \label{eq:P-weighted-path-det}
\end{equation}
Here $I$ denotes the identity matrix of the appropriate size, and the determinant of a $0\times0$ matrix is interpreted as one.  The right-hand side is understood as the resulting polynomial in $X$, independent of the choice of square-root branch.  For $h\geq2$, the zeros of $P_h(X,r)$ are positive real numbers, and the smallest positive zero is $\Lambda_h(r)^{-1}$.
\end{proposition}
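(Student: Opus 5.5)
Write $T_h(X)=\det(I-\sqrt X\,A_h(r))$ and prove $T_h=P_h(X,r)$ by showing both sides satisfy the same recurrence with the same initial values.

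\textbf{Initial values.} For $h=0$ the determinant of the empty matrix is $1$. For $h=1$ the matrix $A_1$ is the $1\times1$ zero matrix, so $T_1=1$. These agree with $P_0=P_1=1$.

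\textbf{Recurrence.} For $h\geq2$, expand the tridiagonal determinant along its last row and column. The last off-diagonal entry of $A_h(r)$ is $r^{(h-2)/2}$, so the standard continuant recurrence gives
\[
  T_h=T_{h-1}-\bigl(\sqrt X\,r^{(h-2)/2}\bigr)^2T_{h-2}=T_{h-1}-Xr^{h-2}T_{h-2}.
\]
The leading principal submatrices of $A_h(r)$ are exactly $A_{h-1}(r)$ and $A_{h-2}(r)$, since their off-diagonal entries $r^{(i-1)/2}$ do not depend on $h$. This is precisely recurrence \eqref{eq:P-rec-2} with $Q=r$. Induction then gives \eqref{eq:P-weighted-path-det} as an identity of polynomials in $X$. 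Only $X=(\sqrt X)^2$ enters the recurrence, so the result is independent of the square-root branch.

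\textbf{Zeros.} For $h\geq2$ and $r>0$, $A_h(r)$ is real symmetric, so its eigenvalues $\mu_1,\dots,\mu_h$ are real. Hence
\[
  T_h=\prod_j(1-\sqrt X\mu_j).
\]
The graph of $A_h$ is bipartite (a path with zero diagonal), so the spectrum is symmetric under $\mu\mapsto-\mu$. Conjugating by $\mathrm{diag}(1,-1,1,\dots)$ sends $A_h$ to $-A_h$, which proves this. Pairing $\pm\mu$ gives
\[
  P_h(X,r)=\prod_{\mu_j>0}(1-X\mu_j^2),
\]
where each positive $\mu$ is counted once per pair. If $h$ is odd there is also a zero eigenvalue, contributing a factor $1$.

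So the zeros of $P_h(X,r)$ are $X=\mu_j^{-2}$ over the positive eigenvalues, and all of them are positive reals. At least one positive eigenvalue exists because $A_h\neq0$ and the trace is zero. The smallest zero comes from the largest positive eigenvalue. Since $A_h$ is nonnegative and irreducible (the path is connected and all weights are positive), Perron--Frobenius identifies this eigenvalue with the spectral radius $\lambda_h(r)$. The smallest zero is therefore $\lambda_h(r)^{-2}=\Lambda_h(r)^{-1}$.

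The only step needing care is the sign and indexing in the continuant expansion, that is, checking that the last removed entry is $r^{(h-2)/2}$, so that it matches the factor $Q^{h-2}$ in \eqref{eq:P-rec-2}. The remaining steps are standard.
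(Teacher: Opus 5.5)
Your proof is correct and follows essentially the same route as the paper: the continuant expansion $T_h=T_{h-1}-Xr^{h-2}T_{h-2}$ is matched against \eqref{eq:P-rec-2} with $T_0=T_1=1$, and then the bipartite $\pm\mu$ pairing together with Perron--Frobenius identifies the smallest zero as $\lambda_h(r)^{-2}=\Lambda_h(r)^{-1}$. The paper's proof also notes that the eigenvalues of the Jacobi matrix $A_h(r)$ are simple, so the positive zeros of $P_h$ are simple; this is not part of the statement, but it is used later in the proof of the radius theorem.
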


\begin{proof}
Let $F_h(X,r)=\det(I-\sqrt X\,A_h(r))$.  Expansion of the tridiagonal determinant gives the recurrence
\begin{equation}
  F_h(X,r)=F_{h-1}(X,r)-Xr^{h-2}F_{h-2}(X,r),
\end{equation}
with initial values $F_0=F_1=1$.  This agrees with \eqref{eq:P-rec-2}.  The matrix $A_h(r)$ is real symmetric and bipartite, so its non-zero eigenvalues occur in pairs $\pm\lambda$.  Hence the determinant in \eqref{eq:P-weighted-path-det} is a polynomial in $X$, whose zeros are $X=\lambda^{-2}$ for the non-zero eigenvalues.  The closest zero is obtained from the largest positive eigenvalue, which is the Perron eigenvalue since $A_h(r)$ is an irreducible non-negative weighted path matrix for $h\geq2$.  Since $A_h(r)$ is a Jacobi matrix with strictly positive off-diagonal entries, its eigenvalues are simple \cite[Chapters~4 and~8]{hornjohnson2013}.  Hence the positive $X$-zeros of $P_h(X,r)$, in particular $X=\Lambda_h(r)^{-1}$, are simple.
\end{proof}

For $p,q>0$, put
\begin{equation}
  r=\frac{q^2}{p^2}.
\end{equation}
The positive branch value associated with the $k$th branch is
\begin{equation}
  x_k(p,q)=\frac{1}{p^{2k-1}q\,\Lambda_{k+1}(r)}
  =\frac{1}{p^{2k}\sqrt r\,\Lambda_{k+1}(r)}.
  \label{eq:xk-weighted-path}
\end{equation}
The denominator appearing directly in the corresponding $D_k$-factor is
\begin{equation}
  P_{k+1}(q^{2k-1}px,p^2/q^2).
\end{equation}
For $pq\ne0$, the inversion identity \eqref{eq:P-inversion} shows that this polynomial has the same zeros in $x$ as \eqref{eq:Delta-k}.  Equivalently,
\begin{equation}
  x_k(p,q)=\frac{1}{q^{2k-1}p\,\Lambda_{k+1}(p^2/q^2)}.
  \label{eq:xk-weighted-path-alt}
\end{equation}
The Chebyshev formula is recovered on the diagonal.  For $r=1$ the matrix $A_h(1)$ is the adjacency matrix of the ordinary path graph, so
\begin{equation}
  \lambda_h(1)=2\cos\left(\frac{\pi}{h+1}\right),
  \qquad
  \Lambda_h(1)=4\cos^2\left(\frac{\pi}{h+1}\right),
\end{equation}
and \eqref{eq:xk-weighted-path} becomes \eqref{eq:xk}.

For fixed $r>0$, write $\sigma=p^2$.  Since the factor $\sqrt r$ in \eqref{eq:xk-weighted-path} is common to all branches, selecting the smallest termwise branch value is equivalent to maximising
\begin{equation}
  \sigma^k\Lambda_{k+1}(r),\qquad k\geq1.
  \label{eq:weighted-path-envelope}
\end{equation}
The crossing of adjacent branches $k$ and $k+1$ occurs at
\begin{equation}
  \sigma=\frac{\Lambda_{k+1}(r)}{\Lambda_{k+2}(r)}.
  \label{eq:weighted-path-crossing}
\end{equation}
Thus the off-diagonal problem is a deformation of the Chebyshev path-spectrum problem.  The unweighted path has been replaced by a geometrically weighted path.  The Perron-root log-concavity needed to order the branches follows from an interval inequality for weighted paths.

We use the following interval inequality.  It is close in spirit to broader spectral submodularity and spectral-radius convexity results for principal submatrices and non-negative matrices \cite{friedland1975,friedland2013}, but here we need the elementary path-specific form below.  Let a path on vertices $1,\ldots,n+1$ have positive edge weights $a_1,\ldots,a_n$.  For an interval of vertices $[i,j]$, let $\rho_{i,j}$ denote the spectral radius of the weighted adjacency matrix induced by this interval, with $\rho_{i,i}=0$.

\begin{lemma}[Interval log-submodularity for weighted paths]
\label{lem:interval-log-submodularity}
For $n\geq2$,
\begin{equation}
  \rho_{1,n}\rho_{2,n+1}
  \geq
  \rho_{2,n}\rho_{1,n+1}.
  \label{eq:interval-log-submodularity}
\end{equation}
\end{lemma}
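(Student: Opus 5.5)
The plan is to recast \eqref{eq:interval-log-submodularity} as a statement about how much the spectral radius gains when the left vertex $1$ is attached, and to show that this gain shrinks when the right end grows. Dividing, the claim is equivalent to
\[
  \kappa_{n+1}\leq\kappa_n,
  \qquad
  \kappa_m:=\frac{\rho_{1,m}}{\rho_{2,m}}\quad(m=n,n+1).
\]
Here $\rho_{2,m}>0$ for $m\geq3$. The degenerate case $n=2$, where $\rho_{2,2}=0$, makes the right-hand side zero and is trivial. By Cauchy interlacing (or Perron--Frobenius monotonicity for the irreducible path) we have $\rho_{1,m}>\rho_{2,m}$, so $\kappa_m>1$.

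\textbf{Schur-complement characterisation.} For $\lambda>\rho_{2,m}$, write $A_{[2,m]}$ for the weighted adjacency matrix on $[2,m]$ and set
\[
  G_{2,m}(\lambda)=\bigl((\lambda I-A_{[2,m]})^{-1}\bigr)_{22}.
\]
Taking the Schur complement of the entry at vertex $1$ in $\lambda I-A_{[1,m]}$ gives
\[
  \det(\lambda I-A_{[1,m]})=\det(\lambda I-A_{[2,m]})\bigl(\lambda-a_1^2G_{2,m}(\lambda)\bigr).
\]
Hence $\rho_{1,m}$ is the unique $\lambda>\rho_{2,m}$ with $\lambda=a_1^2G_{2,m}(\lambda)$. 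Uniqueness holds because the left side increases and $G_{2,m}$ decreases to $0$ on $(\rho_{2,m},\infty)$.

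Next normalise by homogeneity: put $B_m=A_{[2,m]}/\rho_{2,m}$, which has spectral radius $1$, and $H_m(\kappa)=((\kappa I-B_m)^{-1})_{22}$ for $\kappa>1$. Then $G_{2,m}(\kappa\rho_{2,m})=\rho_{2,m}^{-1}H_m(\kappa)$, and $\kappa_m$ is the unique root in $(1,\infty)$ of
\[
  \kappa=\frac{a_1^2}{\rho_{2,m}^2}H_m(\kappa).
\]
The left side is increasing and the right side decreasing, so $\kappa_{n+1}\leq\kappa_n$ follows once we prove the pointwise comparison
\[
  \frac{H_{n+1}(\kappa)}{\rho_{2,n+1}^2}\leq\frac{H_n(\kappa)}{\rho_{2,n}^2}
  \qquad\text{for }\kappa>1 .
\]
It suffices to check this at $\kappa=\kappa_n$, but I would aim for all $\kappa>1$. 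Equivalently, in unnormalised form,
\[
  \rho_{2,n+1}\,G_{2,n+1}(\kappa\rho_{2,n+1})\leq\rho_{2,n}\,G_{2,n}(\kappa\rho_{2,n}).
\]

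\textbf{Main obstacle and first attempt.} Proving this comparison is the crux. Both sides are Stieltjes transforms of the spectral measure of vertex $2$, evaluated at the point $\kappa$ times the top of the support. Appending vertex $n+1$ does two things at once: it pushes the top of the support $\rho_{2,\cdot}$ upward, and it puts more mass near the top. The inequality says the normalised resolvent entry, rescaled, nonetheless decreases.

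I would try an induction on the interval length via the continued-fraction recursion
\[
  G_{i,m}(\lambda)=\frac{1}{\lambda-a_i^2G_{i+1,m}(\lambda)} .
\]
The induction hypothesis would be the same statement for intervals starting at $3$, which is exactly the lemma applied to the shorter path with weights $a_2,\dots,a_n$. The argument would then propagate the inequality through this Möbius map using the monotonicity of $\lambda\mapsto\lambda G_{i,m}(\lambda)$.

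The base case is $n=3$, where $[2,3]$ and $[2,4]$ give explicit square roots and the check is direct.

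As a fallback, I would use the Desnanot--Jacobi (Dodgson) identity for the tridiagonal characteristic polynomials $\varphi_{i,j}(t)=\det(tI-A_{[i,j]})$:
\[
  \varphi_{1,n}\varphi_{2,n+1}-\varphi_{2,n}\varphi_{1,n+1}=(a_1\cdots a_n)^2>0 .
\]
I would combine this with sign information at $t=\rho_{1,n+1}$ and at scaled points $t=c\,\rho_{\cdot}$ to locate the roots. This fallback would also exploit the fact that $\varphi_{i,j}$ has only real zeros that interlace.
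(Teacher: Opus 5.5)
Your Schur-complement reformulation is correct, but it restates the lemma rather than reducing it, and the actual content is never supplied. The resolvents $H_n,H_{n+1}$ do not involve $a_1$. The defining equation $a_1^2=\kappa_n\rho_{2,n}^2/H_n(\kappa_n)$ shows that $a_1\mapsto\kappa_n$ is a bijection from $(0,\infty)$ onto $(1,\infty)$. So the comparison $H_{n+1}(\kappa)/\rho_{2,n+1}^2\leq H_n(\kappa)/\rho_{2,n}^2$ at $\kappa=\kappa_n$ is equivalent to the lemma for that $a_1$, and the version ``for all $\kappa>1$'' is equivalent to the lemma for all $a_1>0$. (Separately, your ``unnormalised form'' reads $H_{n+1}(\kappa)\leq H_n(\kappa)$ and drops the factors $\rho_{2,m}^{-2}$. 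The correct translation is $G_{2,n+1}(\kappa\rho_{2,n+1})/\rho_{2,n+1}\leq G_{2,n}(\kappa\rho_{2,n})/\rho_{2,n}$.) Everything therefore rests on the step you call the crux, and neither route closes it.

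For the induction, put $c_m=\rho_{2,m}/\rho_{3,m}$, $\Phi_m(\mu)=G_{3,m}(\mu\rho_{3,m})/\rho_{3,m}$ and $F_m(\mu)=\mu-a_2^2\Phi_m(\mu)$. The continued-fraction step gives
\[
  \frac{\rho_{2,m}}{G_{2,m}(\kappa\rho_{2,m})}=\rho_{2,m}\rho_{3,m}\,F_m(\kappa c_m),
\]
and you need this to be larger for $m=n+1$ than for $m=n$. The inductive hypothesis gives $F_{n+1}\geq F_n$ pointwise and $c_{n+1}\leq c_n$. But $F_n$ is increasing, so the second fact lowers the evaluation point: $F_{n+1}(\kappa c_{n+1})\geq F_n(\kappa c_{n+1})\leq F_n(\kappa c_n)$. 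The prefactor $\rho_{2,m}\rho_{3,m}$ does increase with $m$. Still, the pieces of information pull in opposite directions, and monotonicity of $\lambda G(\lambda)$ gives no quantitative trade-off, so the step does not propagate as described.

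The Desnanot--Jacobi fallback has the same defect. Your identity is correct. Put $t_*=\rho_{1,n}\rho_{2,n+1}/\rho_{2,n}$; it exceeds $\rho_{2,n+1}$, and hence, by interlacing, the second eigenvalue of $[1,n+1]$. With this, the lemma is equivalent to
\[
  \varphi_{1,n}(t_*)\varphi_{2,n+1}(t_*)\geq(a_1\cdots a_n)^2 .
\]
This is a quantitative lower bound on two positive values at a point beyond both of their largest roots, and sign information at scaled points cannot supply it.

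What is missing is a mechanism that couples the three Perron problems on $[1,n]$, $[2,n]$, $[2,n+1]$ directly. The paper does this with the Perron vectors $u,v,w$ of $[1,n]$, $[2,n+1]$, $[2,n]$. Discrete Wronskians against $w$ show that $u/w$ decreases and $v/w$ increases on the overlap. The test vector $z_i=u_iv_i/w_i$ on $[1,n+1]$ then has Collatz--Wielandt ratios at most $\rho_{1,n}\rho_{2,n+1}/\rho_{2,n}$. At interior vertices the gap to this bound factors as a product of the two Wronskian-signed quantities.
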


\begin{proof}[Proof sketch]
The proof compares Perron eigenvectors on the three overlapping intervals $[1,n]$, $[2,n]$, and $[2,n+1]$.  A discrete Wronskian comparison shows that one quotient of eigenvectors is monotone decreasing across the overlap, while the other is monotone increasing.  These two monotonicities allow one to construct a positive test vector on $[1,n+1]$ whose Collatz--Wielandt ratios are bounded by $\rho_{1,n}\rho_{2,n+1}/\rho_{2,n}$.  The Collatz--Wielandt formula then gives \eqref{eq:interval-log-submodularity}.  The full verification is given in \ref{app:interval-log-submodularity}.
\end{proof}

The preceding lemma uses the path order and the overlap of the intervals.  Applying it to the geometrically weighted path gives the required log-concavity.

\begin{theorem}[Perron-root log-concavity]
\label{thm:lambda-log-concavity}
For every $h\geq2$ and every $r>0$,
\begin{equation}
  \lambda_h(r)^2\geq \lambda_{h-1}(r)\lambda_{h+1}(r),
  \label{eq:lambda-log-concavity}
\end{equation}
and hence
\begin{equation}
  \Lambda_h(r)^2\geq \Lambda_{h-1}(r)\Lambda_{h+1}(r).
  \label{eq:Lambda-log-concavity}
\end{equation}
\end{theorem}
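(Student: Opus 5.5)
The plan is to apply Lemma~\ref{lem:interval-log-submodularity} to the geometrically weighted path with $n=h$. Concretely, take the path on vertices $1,\ldots,h+1$ with edge weights $a_i=r^{(i-1)/2}$ for $1\leq i\leq h$. The interval $[1,h+1]$ induces exactly $A_{h+1}(r)$, so $\rho_{1,h+1}=\lambda_{h+1}(r)$. Likewise $[1,h]$ induces $A_h(r)$, so $\rho_{1,h}=\lambda_h(r)$. The intervals starting at vertex $2$ are rescaled copies. The interval $[2,h+1]$ carries weights $r^{1/2},r,\ldots,r^{(h-1)/2}$, which equal $r^{1/2}$ times the weights of $A_h(r)$, so its adjacency matrix is $r^{1/2}A_h(r)$ and $\rho_{2,h+1}=r^{1/2}\lambda_h(r)$. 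In the same way $[2,h]$ has matrix $r^{1/2}A_{h-1}(r)$ and $\rho_{2,h}=r^{1/2}\lambda_{h-1}(r)$.

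The key structural fact is this self-similarity: shifting the interval by one vertex multiplies every weight by the same factor $r^{1/2}$. Since the spectral radius is positively homogeneous, the shift only rescales it by $r^{1/2}$.

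Substituting into \eqref{eq:interval-log-submodularity} gives
\[
\lambda_h(r)\cdot r^{1/2}\lambda_h(r)\geq r^{1/2}\lambda_{h-1}(r)\,\lambda_{h+1}(r).
\]
Cancelling the positive factor $r^{1/2}$ yields \eqref{eq:lambda-log-concavity}. The lemma requires $n\geq2$, which is exactly $h\geq2$. At $h=2$ we have $\lambda_1=0$ and $\rho_{2,2}=0$, and both the lemma's convention and the inequality $\lambda_2^2\geq0$ are consistent with this. Squaring, which is legitimate because all quantities are non-negative, gives \eqref{eq:Lambda-log-concavity}.

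The only point requiring care is matching the index conventions, namely that the shifted interval $[2,j]$ corresponds to $A_{j-1}$ rather than $A_j$, together with the boundary case $h=2$. All genuine difficulty is already contained in Lemma~\ref{lem:interval-log-submodularity}, whose eigenvector-comparison and Collatz--Wielandt argument is deferred to the appendix. Were that lemma not available, I would try to prove it directly by building a positive test vector on $[1,h+1]$ from the Perron vectors of $A_h$ and $r^{1/2}A_h$, glued across the overlap.
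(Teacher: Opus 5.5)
Your proposal is correct and matches the paper's proof: apply Lemma~\ref{lem:interval-log-submodularity} with $n=h$ to the path with weights $a_i=r^{(i-1)/2}$, use the self-similarity $\rho_{2,j}=r^{1/2}\lambda_{j-1}(r)$, and cancel the common factor $r^{1/2}$. The only cosmetic difference is that the paper handles $h=2$ directly via $\lambda_1(r)=0$ instead of invoking the lemma at $n=2$; both are valid.
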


\begin{proof}
For $h=2$ the inequality is trivial, since $\lambda_1(r)=0$.  For $h\geq3$, apply Lemma~\ref{lem:interval-log-submodularity} to the path on vertices $1,\ldots,h+1$ with edge weights $a_i=r^{(i-1)/2}$.  The interval $[2,h+1]$ is $r^{1/2}$ times the original $h$-vertex geometric path, and $[2,h]$ is $r^{1/2}$ times the original $(h-1)$-vertex geometric path.  Thus
\begin{align}
  \rho_{1,h}&=\lambda_h(r),
  &\rho_{2,h+1}&=r^{1/2}\lambda_h(r),\nonumber\\
  \rho_{2,h}&=r^{1/2}\lambda_{h-1}(r),
  &\rho_{1,h+1}&=\lambda_{h+1}(r).
\end{align}
Substitution into \eqref{eq:interval-log-submodularity} gives \eqref{eq:lambda-log-concavity}.  Squaring gives \eqref{eq:Lambda-log-concavity}.
\end{proof}

\begin{corollary}[Neighbouring branch selection]
\label{cor:neighbouring-branches}
Fix $r>0$ and $0<\sigma<\min(1,r^{-1})$.  The sequence
\begin{equation}
  B_k(\sigma,r)=\sigma^k\Lambda_{k+1}(r),
  \qquad k\geq1,
\end{equation}
tends to zero as $k\to\infty$ and is unimodal in $k$.  Its maximum is attained.  The set of maximisers is an interval of integers, possibly a single point, and it can change only at values of $\sigma$ for which adjacent terms are equal.  The adjacent crossing values
\begin{equation}
  \sigma_k(r)=\frac{\Lambda_{k+1}(r)}{\Lambda_{k+2}(r)}
  \label{eq:sk-r}
\end{equation}
are nondecreasing in $k$.  The statement allows degeneracies: several adjacent equalities may occur at the same value of $\sigma$.
\end{corollary}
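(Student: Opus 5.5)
The plan is to derive everything from Theorem~\ref{thm:lambda-log-concavity} together with a growth bound on $\Lambda_h(r)$.

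\emph{Step 1: decay.} We need $B_k\to0$. By Gershgorin, each row of $A_{k+1}(r)$ has absolute sum at most $r^{(i-2)/2}+r^{(i-1)/2}$. Hence $\lambda_{k+1}(r)\le 2\max(1,r^{(k-1)/2})$ and
\[
  \Lambda_{k+1}(r)\le 4\max(1,r^{k-1}).
\]
This gives $B_k\le 4\sigma^k$ if $r\le1$, and $B_k\le 4r^{-1}(\sigma r)^k$ if $r>1$. The hypothesis $\sigma<\min(1,r^{-1})$ makes both $\sigma$ and $\sigma r$ less than $1$, so $B_k\to0$ geometrically. Since $B_k>0$ for all $k\ge1$ (because $\Lambda_{k+1}>0$ for $k+1\ge2$), the supremum is positive. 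A positive sequence tending to zero attains its maximum.

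\emph{Step 2: unimodality.} The ratio is
\[
  B_{k+1}/B_k=\sigma\,\Lambda_{k+2}(r)/\Lambda_{k+1}(r)=\sigma/\sigma_k(r).
\]
Theorem~\ref{thm:lambda-log-concavity}, applied with $h=k+2\ge3$ (where all quantities are positive), gives $\Lambda_{k+2}^2\ge\Lambda_{k+1}\Lambda_{k+3}$. This is exactly $\sigma_k\le\sigma_{k+1}$, so the crossing values are nondecreasing. The ratios $B_{k+1}/B_k$ are therefore nonincreasing in $k$.

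It follows that $\log B_k$ is concave in $k$. Let
\[
  K=\{k:\sigma_k<\sigma\}.
\]
This set is an initial segment of the integers. We have $B_{k+1}>B_k$ for $k\in K$, $B_{k+1}=B_k$ exactly when $\sigma_k=\sigma$, and $B_{k+1}<B_k$ afterwards. The equality indices form a contiguous block, so the sequence increases, then stays constant on a block, then decreases. This is unimodality.

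\emph{Step 3: the maximisers.} The set of maximisers is $\{k_0,\dots,k_1\}$, where $k_0-1=\max K$ (with $k_0=1$ if $K$ is empty) and the block $k_0\le k<k_1$ consists of the indices with $\sigma_k=\sigma$. This set is an integer interval. It is finite and nonempty by Step~1, because $B_k\to0$ forces strict decrease eventually. Equivalently, $\sigma_k>\sigma$ for large $k$.

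As $\sigma$ varies, $K$ and the equality block change only when $\sigma$ passes through some $\sigma_k$, that is, when adjacent terms are equal. Several consecutive $\sigma_k$ may coincide, which produces the allowed degeneracies.

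The main point needing care is Step~1. The decay must use the specific bound on $\Lambda$ and the hypothesis $\sigma r<1$, since log-concavity alone does not force $B_k\to0$. The rest is the standard argument that a log-concave positive sequence is unimodal.
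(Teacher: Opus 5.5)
Your proposal is correct and follows the paper's own proof. Log-concavity of $\Lambda_{k+1}(r)$ from Theorem~\ref{thm:lambda-log-concavity} makes $B_{k+1}/B_k=\sigma/\sigma_k(r)$ nonincreasing, which gives unimodality, interval maximisers and monotone crossing values; the bound $\Lambda_{k+1}(r)\le4\max(1,r^{k-1})$ together with $\sigma,\sigma r<1$ gives $B_k\to0$ and attainment of the maximum, and your Gershgorin justification of that bound and explicit description of the maximiser block only fill in details the paper calls elementary.
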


\begin{proof}
Put $L_k=\Lambda_{k+1}(r)$.  Theorem~\ref{thm:lambda-log-concavity} says that $L_k$ is log-concave, so $L_{k+1}/L_k$ is nonincreasing.  Hence
\begin{equation}
  \frac{B_{k+1}(\sigma,r)}{B_k(\sigma,r)}
  =\sigma\frac{L_{k+1}}{L_k}
\end{equation}
is nonincreasing in $k$.  The sequence $B_k(\sigma,r)$ is therefore unimodal in the weak sense: it can increase, then remain flat for a finite interval, and then decrease.  Consequently the maximising indices form an interval.  A change of this interval can occur only when one of the adjacent ratios is equal to $1$, equivalently when $B_{k+1}=B_k$, which gives \eqref{eq:sk-r}.  The argument uses only weak monotonicity, so simultaneous adjacent equalities are allowed.  The monotonicity of $\sigma_k(r)$ is the reciprocal form of the monotonicity of $L_{k+1}/L_k$.  Since $\sigma<\min(1,r^{-1})$, both $\sigma<1$ and $\sigma r<1$; the elementary bound $\Lambda_{k+1}(r)\leq4\max(1,r^{k-1})$ gives $B_k(\sigma,r)\to0$, so the maximum is attained.
\end{proof}

We next exclude singularities caused by loss of convergence of the infinite height sum.  Define
\begin{equation}
  x_*(p,q)=\min_{k\geq1}x_k(p,q),
  \qquad 0<p,q<1,
  \label{eq:xstar-pq}
\end{equation}
where $x_k(p,q)$ is given by \eqref{eq:xk-weighted-path}.  This minimum is attained.  Indeed, if $b<1$ and $p,q\leq b$, then
\begin{equation}
  p^{2k-1}q\,\Lambda_{k+1}(q^2/p^2)\leq 4b^{2k},
  \label{eq:xk-uniform-divergence}
\end{equation}
using the bound $\Lambda_{k+1}(r)\leq4\max(1,r^{k-1})$.  Hence $x_k(p,q)$ tends to infinity uniformly on compact subsets of the open square.  On such a compact set, $x_*(p,q)$ is therefore the minimum of finitely many continuous functions $x_k$, and is continuous and attained.

\begin{proposition}[Normal convergence below the first positive branch]
\label{prop:normal-convergence}
Let $K$ be a compact subset of $(0,1)^2$, and let $0\leq Y<\infty$.  For every $\epsilon>0$ with $\epsilon<\inf_{(p,q)\in K}x_*(p,q)$, there exist constants $C<\infty$ and $0<\theta<1$ such that
\begin{equation}
  |G_h(x,y,p,q)|\leq C\theta^h
  \label{eq:Gh-tail-bound}
\end{equation}
uniformly for $h\geq1$, $(p,q)\in K$, $|y|\leq Y$, and $|x|\leq x_*(p,q)-\epsilon$.
\end{proposition}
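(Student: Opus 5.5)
The plan is to bound each factor of the product representation \eqref{eq:Gh-D} by a majorant that uses only its own positive branch $x_k(p,q)$. The bound will come from a resolvent form of $D_k$. Then the Gaussian factor $q^{h^2}$ absorbs everything else. No cancellation in the infinite sum is needed, because every height summand is majorised by a series with nonnegative coefficients.

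\emph{Step 1: resolvent form of $D_k$.} A Dyck path of height at most $k$ is a closed walk from vertex $1$ to vertex $1$ on the path with vertices $1,\ldots,k+1$ (levels $0,\ldots,k$). Give the edge between levels $j$ and $j+1$ the weight $\sqrt{X}\,Q^{j/2}$. A pair of steps across that edge then contributes $XQ^j$, and summing the exponents of $Q$ recovers the diamond area. Hence, as formal series with $Q>0$,
\[
  D_k(X,Q)=\sum_{n\ge0}X^n\bigl(A_{k+1}(Q)^{2n}\bigr)_{11}.
\]
This is consistent with Proposition~\ref{prop:weighted-path}, since the denominator is $\det(I-\sqrt X A_{k+1}(Q))=P_{k+1}(X,Q)$. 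Because $A_{k+1}(Q)$ is real symmetric, $(A^{2n})_{11}\le\|A\|^{2n}=\Lambda_{k+1}(Q)^n$. The coefficients are nonnegative, so
\[
  |D_k(X,Q)|\le D_k(|X|,Q)\le\frac{1}{1-|X|\Lambda_{k+1}(Q)}
  \qquad\text{for } |X|\Lambda_{k+1}(Q)<1 .
\]
I apply this with $X=q^{2k-1}px$ and $Q=p^2/q^2$. By \eqref{eq:xk-weighted-path-alt}, $|X|\Lambda_{k+1}(Q)=|x|/x_k(p,q)=:t_k$, so $|D_k(q^{2k-1}px,p^2/q^2)|\le(1-t_k)^{-1}$.

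\emph{Step 2: uniform control of the product.} Fix $b<1$ with $K\subset(0,b]^2$, and let $M=\sup_K x_*$, which is finite since $x_*$ is continuous on $K$. For $|x|\le x_*(p,q)-\epsilon$ we have $t_k\le 1-\epsilon/x_*(p,q)\le 1-\epsilon/M=:1-\delta$ for every $k$. By \eqref{eq:xk-uniform-divergence} we also have $t_k\le M\cdot 4b^{2k}$. Using $-\log(1-t)\le t/\delta$ for $0\le t\le 1-\delta$, I get
\[
  \prod_{k\ge1}(1-t_k)^{-2}\le\exp\Bigl(\tfrac{2}{\delta}\sum_{k\ge1}4Mb^{2k}\Bigr)=:C_0 ,
\]
uniformly on $K$ and in $h$. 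The product in \eqref{eq:Gh-D} has every factor with $k<h$ squared and the $k=h$ factor once, so it is bounded by $C_0$.

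\emph{Step 3: conclusion.} From \eqref{eq:Gh-D},
\[
  |G_h|\le (MY)^h\, b^{h^2}\, C_0 ,
\]
and $(MY)^h b^{h^2}\le C_1\theta^h$ for any fixed $\theta\in(0,1)$, because $b^{h^2}$ decays superexponentially. This gives \eqref{eq:Gh-tail-bound}.

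The main point needing care is Step~1. I have to check that the walk weights give exactly the diamond-area substitution matching $A_{k+1}(Q)$, with the right vertex as the base level, and that the bound is expressed through $x_k$ in the orientation $Q=p^2/q^2$. That is why I use \eqref{eq:xk-weighted-path-alt}, which is valid even when $Q>1$ and the unbounded-height area series would diverge. The uniformity in Step~2 then follows from the geometric decay $t_k\le 4Mb^{2k}$, which makes the infinite product converge independently of $h$.
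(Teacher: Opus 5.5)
Your proof is correct. It shares the paper's overall skeleton: bound every insertion factor in \eqref{eq:Gh-D} uniformly in $h$, then let $q^{h^2}\le b^{h^2}$ absorb $(MY)^h$. Where you differ is the key estimate on $D_k$.

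The paper treats $D_k$ as a quotient of two determinants and splits the levels:
\begin{itemize}
\item for $k\ge k_0$ it bounds numerator and denominator separately through $|\det(I-B)|^{\pm1}\le\exp(2m\|B\|)$;
\item for the finitely many $k<k_0$ it uses that the denominator roots satisfy $z_{k,j}\ge x_*$, together with a compactness upper bound $Z$ on these roots and a separate continuity bound on the numerators.
\end{itemize}

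You instead use the transfer-matrix expansion $D_k(X,Q)=\sum_n X^n(A_{k+1}(Q)^{2n})_{11}$, which has nonnegative coefficients, together with $(A^{2n})_{11}=\|A^ne_1\|^2\le\|A\|^{2n}$. Combined with \eqref{eq:xk-weighted-path-alt}, this gives the single majorant $|D_k|\le(1-|x|/x_k(p,q))^{-1}$ for every $k$ at once. Your check that the edge weights $\sqrt X\,Q^{j/2}$ reproduce $X^nQ^d$ is right, since $d$ is the sum of the starting levels of the up-steps.

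What your route buys:
\begin{itemize}
\item It removes the split at $k_0$, the numerator bound, and the root-location and compactness arguments.
\item It gives slightly sharper tail control: $\log|D_k|=O(b^{2k})$ rather than $O(kb_+^k)$.
\item It makes transparent why $x_*$ is exactly the threshold.
\end{itemize}
The cost is reliance on the combinatorial walk expansion and the positivity of its coefficients. The paper's argument works only from the determinant representation and the location of its zeros.

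A minor aside: finiteness of $M$ needs only $x_*\le x_1=1/(pq)$, not continuity of $x_*$.
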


\begin{proof}
Choose $0<b_-<b_+<1$ such that $b_-\leq p,q\leq b_+$ on $K$.  Since $x_*(p,q)\leq x_1(p,q)=1/(pq)$, the region in question satisfies $|x|\leq M:=b_-^{-2}$.  The compactness assumption is used only through these two-sided bounds on $p,q$ and through the uniform separation from the finitely many small denominator zeros below.

Write
\begin{equation}
  \widehat X_k=q^{2k-1}px,
  \qquad
  Q=\frac{p^2}{q^2}.
\end{equation}
The denominator of $D_k(\widehat X_k,Q)$ is
\begin{equation}
  P_{k+1}(\widehat X_k,Q)=\det(I-\sqrt{\widehat X_k}A_{k+1}(Q)).
\end{equation}
The determinant is a polynomial in $x$, so the choice of the square-root branch is immaterial.  For estimating it on a complex disk we choose any square root and bound the moduli of the matrix entries.  The edge moduli of the matrix $\sqrt{\widehat X_k}A_{k+1}(Q)$ are
\begin{equation}
  |x|^{1/2}p^{i-1/2}q^{k-i+1/2},
  \qquad 1\leq i\leq k,
\end{equation}
and hence are at most $M^{1/2}b_+^k$.  The numerator determinant $P_k(Q\widehat X_k,Q)$ has edge moduli
\begin{equation}
  |x|^{1/2}p^{i+1/2}q^{k-i-1/2},
  \qquad 1\leq i\leq k-1,
\end{equation}
which obey the same bound.  Since a symmetric tridiagonal matrix has operator norm at most twice the largest edge modulus, the operator norms of the numerator and denominator matrices are at most $2M^{1/2}b_+^k$.

Choose $k_0$ so large that $2M^{1/2}b_+^k\leq1/2$ for $k\geq k_0$.  For any $m\times m$ matrix $B$ with $\|B\|\leq1/2$,
\begin{equation}
  |\det(I-B)|\leq \exp(2m\|B\|),
  \qquad
  |\det(I-B)|^{-1}\leq \exp(2m\|B\|).
\end{equation}
The first inequality follows from the singular-value bound on $\det(I-B)$, and the second from $s_{\min}(I-B)\geq1-\|B\|$.  Applying these estimates to the two determinant representations gives
\begin{equation}
  |D_k(q^{2k-1}px,p^2/q^2)|
  \leq \exp(C_1kb_+^k),
  \qquad k\geq k_0,
  \label{eq:Dk-tail-bound}
\end{equation}
with $C_1$ independent of $x,p,q$.

It remains to control the finitely many factors $1\leq k<k_0$.  For such a fixed $k$, Proposition~\ref{prop:weighted-path} applied with $Q=p^2/q^2>0$ shows that the zeros of $P_{k+1}(q^{2k-1}px,Q)$ in the $x$-plane are positive real numbers.  Denote them by $z_{k,j}(p,q)$, counted with multiplicity.  The smallest of these roots is the branch value corresponding to this denominator, and the remaining roots are larger.  Hence
\begin{equation}
  z_{k,j}(p,q)\geq x_k(p,q)\geq x_*(p,q).
\end{equation}
For the finitely many roots with $k<k_0$, continuity on the compact set $K$ also gives an upper bound $z_{k,j}(p,q)\leq Z<\infty$.  Therefore, whenever $|x|\leq x_*(p,q)-\epsilon$,
\begin{equation}
  |1-x/z_{k,j}(p,q)|
  \geq 1-\frac{|x|}{z_{k,j}(p,q)}
  \geq \frac{z_{k,j}(p,q)-|x|}{z_{k,j}(p,q)}
  \geq \frac{\epsilon}{Z}.
\end{equation}
Since the finite denominator polynomials have constant term one and factor as products of these terms, this gives a uniform lower bound away from zero.  Their numerator polynomials are continuous on the compact region $K\times\{|x|\leq M\}$, hence are uniformly bounded.  Thus the finitely many low-level $D_k$ factors are uniformly bounded on the same complex disks.

Combining the finite part with the tail estimate, and using
\begin{equation}
  \sum_{k\geq k_0}kb_+^k<\infty,
\end{equation}
shows that the product of all $D_k$-bounds appearing in \eqref{eq:Gh-D}, with the lower-level factors squared and the top factor single, is bounded uniformly in $h$.  Returning to \eqref{eq:Gh-D} gives
\begin{equation}
  |G_h(x,y,p,q)|\leq C_2 (MY)^h b_+^{h^2}.
\end{equation}
Because $b_+<1$, the quadratic factor $b_+^{h^2}$ dominates any fixed exponential growth coming from $(MY)^h$.  After enlarging the constant for finitely many small $h$, this is bounded by $C\theta^h$ for suitable $C<\infty$ and $0<\theta<1$.
\end{proof}

\begin{theorem}[Open-square radius theorem and branch selection]
\label{thm:open-square-radius}
For $0<p,q<1$, the radius of convergence of $G(x,1,p,q)$ as a power series in $x$ is
\begin{equation}
  R(p,q)=x_*(p,q)
  =\min_{k\geq1}\frac{1}{p^{2k-1}q\,\Lambda_{k+1}(q^2/p^2)}.
  \label{eq:two-weight-radius}
\end{equation}
For fixed $r>0$, set $\sigma=p^2$ and $q^2=r\sigma$.  As $\sigma$ varies in $0<\sigma<\min(1,r^{-1})$, the set of active branches can change only at neighbouring crossings, with possible ties.  If
\begin{equation}
  \sigma_k(r)=\frac{\Lambda_{k+1}(r)}{\Lambda_{k+2}(r)},
  \label{eq:two-weight-transition-values}
\end{equation}
then the $k$th adjacent crossing is parametrised by
\begin{equation}
  p^2=\sigma_k(r),\qquad q^2=r\sigma_k(r),
  \label{eq:two-weight-transition-curves}
\end{equation}
for those $r$ for which $\sigma_k(r)<\min\{1,r^{-1}\}$.  Several adjacent crossings may coincide, and the statement allows non-unique active branches.  The family of transition curves is symmetric under $p\leftrightarrow q$.
\end{theorem}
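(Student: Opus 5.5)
The radius formula follows from two inequalities; the branch-selection statements are then a reformulation of Corollary~\ref{cor:neighbouring-branches}.

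For the lower bound $R(p,q)\geq x_*(p,q)$, apply Proposition~\ref{prop:normal-convergence} with $K=\{(p,q)\}$ and $Y=1$. For every small $\epsilon>0$, the summands $G_h(x,1,p,q)$ are then bounded by $C\theta^h$ on the closed disk $|x|\leq x_*(p,q)-\epsilon$. Each $G_h$ is rational in $x$, and its denominator zeros are positive reals at least $x_*$. Hence each $G_h$ is analytic on this disk, and the sum converges normally there to an analytic function. The sum agrees coefficientwise with $G(x,1,p,q)$, because each coefficient of $x^n$ receives contributions from only finitely many heights, so the Taylor expansion at $0$ is the formal series. Letting $\epsilon\downarrow0$ gives $R\geq x_*$.

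For the upper bound $R\leq x_*$, all coefficients of $G(x,1,p,q)$ are nonnegative when $p,q>0$. By Pringsheim's theorem it therefore suffices to show that $G(x,1,p,q)\to\infty$ as $x\uparrow x_*$ along the reals. Pick $k$ with $x_k=x_*$ and consider the single summand $G_{k}$. It contains the factor $D_k(q^{2k-1}px,p^2/q^2)$ exactly once, as the top factor. Its denominator $P_{k+1}(q^{2k-1}px,Q)$ has a simple zero at $x_k$, the smallest positive root, by Proposition~\ref{prop:weighted-path}.

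The main obstacle is to rule out cancellation. There are two ways to do this.
\begin{itemize}
\item Combinatorially, $D_k(X,Q)$ is a series with nonnegative coefficients in $X$ whose radius is exactly the reciprocal of the Perron root. Its coefficients grow like $\Lambda_{k+1}^n$ up to polynomial factors, because the bounded-height paths form an irreducible transfer matrix $A_{k+1}(Q)$ with $Q>0$. So $D_k\to+\infty$ as $X$ increases to that value.
\item Algebraically, the numerator $P_k(QX,Q)$ is the determinant of a proper principal submatrix of the weighted path. Its Perron root is strictly smaller by strict monotonicity of the Perron root for irreducible matrices. So it does not vanish at $x_k$ and is positive there.
\end{itemize}
All other factors of $G_k$ are positive series with nonnegative coefficients, analytic and strictly positive on $[0,x_*]$ since $x_j\geq x_*$. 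Hence $G_k(x)\to+\infty$, and since every $G_h\geq0$ on $(0,x_*)$, also $G\to+\infty$. Should the lower-level squared factors share the same root $x_*$ (a tie), this only strengthens the divergence.

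For the second part, fix $r$ and write $x_k=1/(\sigma^k\sqrt r\,\Lambda_{k+1}(r))$. Minimising $x_k$ is equivalent to maximising $B_k(\sigma,r)$. The constraints $0<p,q<1$ are exactly $\sigma<\min(1,r^{-1})$. Corollary~\ref{cor:neighbouring-branches} then yields the following:
\begin{itemize}
\item the maximiser set is a nonempty integer interval;
\item it changes only at adjacent equalities $\sigma=\sigma_k(r)$, which gives the parametrisation \eqref{eq:two-weight-transition-curves};
\item the crossing values $\sigma_k(r)$ are monotone, and ties are allowed.
\end{itemize}
Symmetry of the transition curves follows from the fact that $x_k(p,q)=x_k(q,p)$. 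This holds by the equivalence of \eqref{eq:xk-weighted-path} and \eqref{eq:xk-weighted-path-alt}, which come from the inversion identity \eqref{eq:P-inversion}. So the branch surfaces, and hence their crossings, are invariant under $p\leftrightarrow q$.
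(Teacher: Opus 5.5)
Your proposal is correct and is essentially the paper's proof. The bound $R\geq x_*$ comes from Proposition~\ref{prop:normal-convergence}; taking $K=\{(p,q)\}$ is legitimate and slightly simpler than the paper's compact-neighbourhood detour. The bound $R\leq x_*$ comes from coefficient positivity together with the surviving simple Perron pole of the top factor $D_k$ in $G_k$. Your strict monotonicity of the Perron root under passage to a proper principal submatrix is the same fact as the paper's strict Cauchy interlacing, and the branch-selection and symmetry parts are read off from Corollary~\ref{cor:neighbouring-branches} and the inversion identity as in the paper.
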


\begin{proof}
Fix $0<p,q<1$, and let $0<\delta<x_*(p,q)$.  By the continuity of $x_*$ on the open square, choose a compact neighbourhood $K\subset(0,1)^2$ of $(p,q)$ such that $x_*(p',q')>x_*(p,q)-\delta/2$ for all $(p',q')\in K$.  Proposition~\ref{prop:normal-convergence}, applied on $K$ with $Y=1$ and $\epsilon=\delta/2$, gives normal convergence uniformly for $|x|\leq x_*(p',q')-\delta/2$ and $(p',q')\in K$.  Since $|x|\leq x_*(p,q)-\delta$ lies inside these disks, the height expansion is normally convergent at $(p,q)$ on every closed disk $|x|\leq x_*(p,q)-\delta$.  Hence the sum is analytic for $|x|<x_*(p,q)$, and the radius of convergence is at least $x_*(p,q)$.

For the reverse inequality, choose an active index $k$, so that $x_k(p,q)=x_*(p,q)$.  If several indices attain the minimum, choose any one of them.  The top insertion factor
\begin{equation}
  D_k(q^{2k-1}px,p^2/q^2)
\end{equation}
is a bounded-height Dyck path generating function with positive weights.  Its Perron denominator zero is simple: the corresponding weighted path matrix is an irreducible Jacobi matrix with positive off-diagonal entries, and the bipartite pair $\pm\lambda$ contributes a single factor $1-X\lambda^2$ to the determinant in the variable $X$.  The numerator is the characteristic polynomial of the principal subpath obtained by deleting one endpoint of this path.  By strict Cauchy interlacing for irreducible Jacobi matrices \cite[Chapter~4]{hornjohnson2013}, the largest eigenvalue of this principal submatrix is strictly smaller than the Perron eigenvalue of the full matrix.  Hence the numerator is non-zero at the Perron denominator zero, so the Perron zero survives cancellation.  Thus this $D_k$ factor has radius $x_k(p,q)$ in the variable $x$.

All coefficients of the height expansion are non-negative when $p,q>0$ and $y=1$.  Moreover, the other factors in the height-$k$ summand have non-negative coefficients and constant term $1$.  Consequently the height-$k$ summand coefficientwise dominates a positive monomial times the above $D_k$ factor.  By the Cauchy--Hadamard formula, coefficientwise domination by a series of radius $x_k(p,q)$ implies that the height-$k$ summand has radius at most $x_k(p,q)$.  Since the full generating function $G(x,1,p,q)$ is the coefficientwise sum of the height contributions and dominates the height-$k$ contribution, again by Cauchy--Hadamard its radius is at most $x_k(p,q)=x_*(p,q)$.  This also covers branch crossings, because coefficientwise positivity precludes enlargement of the radius by cancellation among height contributions.  Combining both inequalities gives \eqref{eq:two-weight-radius}.

The branch-selection assertion follows from Corollary~\ref{cor:neighbouring-branches} applied to $\sigma=p^2$ and $r=q^2/p^2$; the hypotheses are exactly $p^2<1$ and $q^2=p^2r<1$.  The corollary allows a finite interval of maximising indices, so the theorem includes the case of ties.  The symmetry of the transition curves follows from the denominator symmetry \eqref{eq:R-symmetry}; equivalently, the denominator-zero identity gives the same branch locus after interchanging $p$ and $q$.
\end{proof}

The diagonal line $p=q=s$ is the explicit Chebyshev specialisation of Theorem~\ref{thm:open-square-radius}.  On this line the general denominator factors reduce to the Chebyshev factors of Section~\ref{sec:diagonal}, and the branch crossings occur at the explicit points \eqref{eq:sk}.  Away from the diagonal, the transition curves are obtained by solving equality conditions between neighbouring branches.  The first two are \eqref{eq:first-general-transition} and \eqref{eq:second-general-transition}.  Further curves are determined in the same way from the weighted-path Perron roots.

The first two transition curves, \eqref{eq:first-general-transition} and \eqref{eq:second-general-transition}, already show the symmetric neighbouring-crossing structure.  The branch selection is supplied by Corollary~\ref{cor:neighbouring-branches} and Theorem~\ref{thm:open-square-radius}, rather than by numerical plotting.

\section{Concluding remarks}
\label{sec:discussion}

We have obtained an exact joint area/water-capacity refinement of Dyck path enumeration.  The height decomposition expresses the generating function as an infinite sum of rational functions built from the finite polynomials $P_h(X,Q)$, combining the capacity decomposition with the height-restricted area machinery.  The representation separates the combinatorial construction from the analytic problem of selecting the radius-determining branch.

For the diagonal specialisation $p=q$, the model enumerates paths by the area below their upper hull.  The polynomial $P_h(X,1)$ reduces to a Chebyshev polynomial, and the positive denominator branches are explicit.  For $0<s<1$, the open-square theorem identifies the radius as their lower envelope.  The branch crossings accumulate at the ordinary Dyck critical point and produce the branch-envelope asymptotic curve \eqref{eq:xc-diagonal-asymp}.  A consistency check at the unweighted point is recorded in \ref{app:first-moment}.

Boundary regimes such as $p=1$, $q=1$, or weights outside the open square require separate analytic treatment.  The exact formulae and denominator symmetry remain valid there, but the compact tail estimate used in Theorem~\ref{thm:open-square-radius} is specific to the open square.

Within the open square $0<p,q<1$, the main analytic result is that the termwise denominator singularities determine the length radius.  The denominator symmetry gives a symmetric family of termwise denominator branch surfaces, while the weighted-path representation identifies the positive real branches with Perron roots of geometrically weighted path matrices.  The normal-convergence estimate rules out earlier singularities coming from the infinite height sum.  The interval log-submodularity theorem, whose proof is separated in \ref{app:interval-log-submodularity}, gives log-concavity of the Perron-root squares and thereby orders the adjacent crossings.  Hence the radius of convergence is the minimum in \eqref{eq:two-weight-radius}, and the selected branch, or set of selected branches at a tie, changes only when neighbouring branches agree.

The interval log-submodularity argument is independent of Dyck paths: it applies to spectral radii of overlapping intervals in any positively weighted path.  In the present problem, the geometric self-similarity of the edge weights specialises that interval inequality to the log-concavity used for branch selection.  This spectral comparison is then used in the proof of the open-square radius theorem.

\appendix

\section{First moment at the unweighted point}
\label{app:first-moment}

This appendix gives a consistency check independent of the proof of the two-weight radius theorem.  Set $y=1$, and let $\Dyck_n=\{\pi\in\Dyck:n(\pi)=n\}$.  On the diagonal $p=q=s$, the variable $s$ records hull area, so coefficientwise differentiation at $s=1$ gives
\begin{equation}
  [x^n]\left.\frac{\partial}{\partial s}G(x,1,s,s)\right|_{s=1}
  =\sum_{\pi\in\Dyck_n}\ell(\pi).
\end{equation}
Since $\ell(\pi)=a(\pi)+w(\pi)$, this moment is the sum of the ordinary proper-area moment and the water-capacity moment.  Thus, with
\begin{equation}
  A_n=\sum_{\pi\in\Dyck_n}a(\pi)=4^n-\binom{2n+1}{n}
\end{equation}
from the standard area enumeration \cite{chapman1999,woan2001}, and with $W_n=\sum_{\pi\in\Dyck_n}w(\pi)$ as given explicitly by Blecher, Brennan and Knopfmacher \cite[Theorem~4]{blecher2020}, one obtains
\begin{equation}
  [x^n]\left.\frac{\partial}{\partial s}G(x,1,s,s)\right|_{s=1}=A_n+W_n.
\end{equation}
This records only a consistency check: the diagonal weight differentiates to a statistic already decomposed as the sum of two known first moments.

\section{Proof of interval log-submodularity}
\label{app:interval-log-submodularity}

\begin{proof}[Proof of Lemma~\ref{lem:interval-log-submodularity}]
For $n=2$ the right-hand side is zero.  Let $n\geq3$.  Then $\rho_{2,n}>0$, so the divisions by $\gamma$ below are legitimate.  Put
\begin{equation}
  \alpha=\rho_{1,n},
  \qquad
  \beta=\rho_{2,n+1},
  \qquad
  \gamma=\rho_{2,n}.
\end{equation}
By monotonicity of spectral radius under passing from an induced subpath to a larger induced subpath, $\alpha\geq\gamma$ and $\beta\geq\gamma$.  Let $u,v,w$ be positive Perron eigenvectors on $[1,n]$, $[2,n+1]$, and $[2,n]$, respectively, extended by zero at their Dirichlet boundaries:
\begin{equation}
  u_0=u_{n+1}=0,
  \qquad
  v_1=v_{n+2}=0,
  \qquad
  w_1=w_{n+1}=0.
\end{equation}
For $1\leq i\leq n$, set
\begin{equation}
  \mathcal W_i=a_i(u_iw_{i+1}-u_{i+1}w_i).
\end{equation}
The eigenvalue equations give, on the common interval $2\leq i\leq n$, the discrete Wronskian identity
\begin{equation}
  \mathcal W_i-\mathcal W_{i-1}=(\gamma-\alpha)u_iw_i.
  \label{eq:wronskian-uw}
\end{equation}
Since $\mathcal W_n=0$ and $\alpha\geq\gamma$, it follows that $\mathcal W_i\geq0$.  Hence $u_i/w_i$ is decreasing on $2,\ldots,n$.  Similarly, set
\begin{equation}
  \mathcal Z_i=a_i(v_iw_{i+1}-v_{i+1}w_i),
  \qquad 1\leq i\leq n.
\end{equation}
Then
\begin{equation}
  \mathcal Z_i-\mathcal Z_{i-1}=(\gamma-\beta)v_iw_i,
  \qquad 2\leq i\leq n,
  \label{eq:wronskian-vw}
\end{equation}
with $\mathcal Z_1=0$ and $\beta\geq\gamma$, so $\mathcal Z_i\leq0$ and $v_i/w_i$ is increasing on $2,\ldots,n$.

Define a positive vector $z$ on $[1,n+1]$ by
\begin{equation}
  z_1=\frac{u_1v_2}{w_2},
  \qquad
  z_i=\frac{u_iv_i}{w_i}\quad(2\leq i\leq n),
  \qquad
  z_{n+1}=\frac{u_nv_{n+1}}{w_n}.
\end{equation}
We show that, for the weighted adjacency matrix $A$ on $[1,n+1]$,
\begin{equation}
  \frac{(Az)_i}{z_i}\leq \frac{\alpha\beta}{\gamma}
  \qquad(1\leq i\leq n+1).
  \label{eq:collatz-bound}
\end{equation}
At the two endpoints these ratios are $\alpha$ and $\beta$, respectively, and both are at most $\alpha\beta/\gamma$.  At an interior point $3\leq i\leq n-1$, put
\begin{equation}
\begin{array}{lll}
  \alpha_-=a_{i-1}u_{i-1}/u_i,&
  \alpha_+=a_i u_{i+1}/u_i,\\[3pt]
  \beta_-=a_{i-1}v_{i-1}/v_i,&
  \beta_+=a_i v_{i+1}/v_i,\\[3pt]
  \gamma_-=a_{i-1}w_{i-1}/w_i,&
  \gamma_+=a_i w_{i+1}/w_i.
\end{array}
\end{equation}
Then $\alpha_-+\alpha_+=\alpha$, $\beta_-+\beta_+=\beta$, and $\gamma_-+\gamma_+=\gamma$.  Moreover
\begin{equation}
  \frac{(Az)_i}{z_i}=\frac{\alpha_-\beta_-}{\gamma_-}
  +\frac{\alpha_+\beta_+}{\gamma_+}.
\end{equation}
The monotonicity of $u/w$ gives
\begin{equation}
  \alpha_-\gamma_+-\alpha_+\gamma_-\geq0,
\end{equation}
and the monotonicity of $v/w$ gives
\begin{equation}
  \beta_+\gamma_- - \beta_-\gamma_+\geq0.
\end{equation}
Therefore
\begin{align}
  &\frac{\alpha\beta}{\gamma}
  -\left(\frac{\alpha_-\beta_-}{\gamma_-}
  +\frac{\alpha_+\beta_+}{\gamma_+}\right)\nonumber\\
  &\qquad =\frac{(\alpha_-\gamma_+-\alpha_+\gamma_-)
  (\beta_+\gamma_- - \beta_-\gamma_+)}
  {\gamma_-\gamma_+(\gamma_-+\gamma_+)}\geq0.
\end{align}
The points $i=2$ and $i=n$ are boundary cases.  At $i=2$, define
\begin{equation}
  \alpha_- = a_1u_1/u_2,
  \qquad
  \alpha_+ = a_2u_3/u_2,
\end{equation}
so that $\alpha_-+\alpha_+=\alpha$.  One obtains
\begin{equation}
  \frac{(Az)_2}{z_2}=\alpha_-+\alpha_+\frac{\beta}{\gamma}
  \leq (\alpha_-+\alpha_+)\frac{\beta}{\gamma}
  =\frac{\alpha\beta}{\gamma},
\end{equation}
and the case $i=n$ is symmetric.  Thus \eqref{eq:collatz-bound} holds.  The Collatz--Wielandt formula \cite[Chapter~8]{hornjohnson2013} gives
\begin{equation}
  \rho_{1,n+1}\leq \frac{\alpha\beta}{\gamma},
\end{equation}
which is \eqref{eq:interval-log-submodularity}.
\end{proof}

\section{The second off-diagonal transition curve}
\label{app:second-transition}

This appendix records the short calculation leading to \eqref{eq:second-general-transition}.  Put $r=q^2/p^2$.  For the second and third branches,
\[
  \Lambda_3(r)=1+r,
\]
while $\Lambda_4(r)$ is the larger root of
\[
  L^2-(1+r+r^2)L+r^2=0.
\]
The equality $x_2(p,q)=x_3(p,q)$ is equivalent to
\[
  \frac{1+r}{p^2}=\Lambda_4(r).
\]
Substituting $r=q^2/p^2$ into the quadratic and multiplying by $p^8$ gives \eqref{eq:second-general-transition}.  In the open square, $p^2<1$ implies $(1+r)/p^2>1+r$, so the smaller root of the quadratic is excluded.  Thus the relevant open-square component is equivalently parametrised by
\[
  p^2=\frac{1+r}{\Lambda_4(r)},\qquad
  q^2=\frac{r(1+r)}{\Lambda_4(r)},\qquad r>0.
\]

\section*{Data availability}
No research data were generated or analysed during this theoretical study.

\section*{Declaration of generative AI and AI-assisted technologies in the manuscript preparation process}
During the preparation of this work the authors used OpenAI's ChatGPT to assist with language editing, consistency checks, and preparation of submission materials.  After using this tool, the authors reviewed and edited the content as needed and take full responsibility for the content of the published article.

\end{document}